\newfont{\msbm}{msbm10 at 11pt}
\newcommand {\Z} {\mbox{\msbm Z}}
\newcommand {\N} {\mbox{\msbm N}}
\newcommand {\1} {\mathds{1}}
\newfont{\msbmsm}{msbm10 at 8pt}
\newtheorem{Theo}{Theorem}
\newtheorem{Lemma}[Theo]{Lemma}
\newtheorem{Cor}[Theo]{Corollary}
\newtheorem{Prop}[Theo]{Proposition}
\def\eps{\varepsilon}
\def\Var{\textup{Var}}
\begin{document}
\title{The Nested Kingman Coalescent: \\ Speed of Coming Down from Infinity}
\author{Airam Blancas Ben\'itez\thanks{Goethe Universit\"at Frankfurt, email: blancas@math.uni-frankfurt.de}, \: Tim Rogers\thanks{University of Bath, email: t.c.rogers@bath.ac.uk}, \: Jason Schweinsberg\thanks{University of California San Diego, email: jschwein@math.ucsd.edu}, \: Arno Siri-J\'egousse\thanks{Universidad Nacional Aut\'onoma de M\'exico, email: arno@sigma.iimas.unam.mx}}
\maketitle

\vspace{-.3in}
\begin{abstract}
The nested Kingman coalescent describes the ancestral tree of a population undergoing neutral evolution at the level of individuals and at the level of species, simultaneously. We study the speed at which the number of lineages descends from infinity in this hierarchical coalescent process and prove the existence of an early-time phase during which the number of lineages at time $t$ decays as $ 2\gamma/ct^2$, where $c$ is the ratio of the coalescence rates at the individual and species levels, and the constant $\gamma\approx 3.45$ is derived from a recursive distributional equation for the number of lineages contained within a species at a typical time. 
\end{abstract}

\section{Introduction}
Kingman's coalescent \cite{kingman82} lies at the centre of modern mathematical population genetics. It is a simple probabilistic model describing the ancestral tree of a population undergoing neutral evolution, which has been shown to apply to a wide variety of population dynamical models \cite{mohle00}, and gives rise to the hugely important Ewens sampling formula \cite{ewens72} for the expected genetic variation within a population. Work on Kingman's coalescent and its variants has fueled a wealth of developments in the probability literature, summarised succinctly in \cite{berestycki09r}. 

A key result of this theory is that Kingman's coalescent \emph{comes down from infinity}, meaning coalescence occurs so quickly that even when the process is started with an infinite number of lineages, only finitely many survive after any positive time. It is in fact possible to be more precise and state the speed of this descent from infinity. Let $K_n(t)$ denote the number of lineages surviving to time $t$ in the Kingman coalescent initialized on a population of size $n$.  Theorem 1 of \cite{berestycki10} (see also \cite{aldous99}) states that taking $n\to\infty$ and then $t\to0$ we have the almost sure convergence $tK_n(t)\to2$.
Thus, for small times the number of surviving lineages in the Kingman coalescent decays as $1/t$. This result is important to the population genetics community as it characterizes the expected shape of the lineages through time (LTT) plot \cite{harvey94,mooers97}, a popular technique for analyzing phylogenetic trees reconstructed from genetic data. 
The speed of descent from infinity has also been studied for coalescents with multiple mergers in \cite{berestycki10} and for more general birth and death processes in \cite{bans}.

From the perspective of applications to genetics, a limitation of Kingman's coalescent is that it describes only the historical coalescence of lineages within a species, and can not at the same time account for macroevolutionary events occurring between species. The problem of how the gene tree is embedded inside the species tree has been one of the central research questions of population genetics for some time now (see, e.g. \cite{maddison97,szollosi14}), and the issue of how to draw the distinction between intra-  and inter-specific genetic variation is an important and contested one \cite{rossberg13,morgan14,rossberg14}.

In this article we address this defect in the theory by computing the speed of descent from infinity in a nested (hierarchical) coalescent process which models both the species tree and the embedded gene tree as a Kingman coalescent, with the latter constrained to be embedded in the former -- see Figure~1 for an illustration. We prove that this model exhibits an early-time period in which the number of lineages decays as $1/t^2$; much faster than Kingman's coalescent. This result is potentially important for the environmental metagenomics community, where differentiating between inter- and intra-specific genetic variation is a key step in quantifying biodiversity (see e.g. \cite{creer2010}). Empirical verification of a $1/t^2$ scaling in the LTT plot of an experimentally reconstructed phylogeny would suggest, according to our results, that the gene tree and species tree are evolving on the same time scale, greatly complicating this task.

The article is organised as follows. In the remainder of this section we give the formal definition of our process (and its population dynamical dual), and state our main theorem. Section \ref{sec:kingman} develops several results for the standard Kingman coalescent to do with the rate of decrease of the number of lineages, and the asymptotic independence of branches in the ancestral tree. These results are needed for our investigation since, in the nested model, both the species tree and the within-species gene trees (before and between species merger events) are described by Kingman's coalescent. Section \ref{sec:nested} brings together the results of Section \ref{sec:kingman} to deduce a recursion relation between species merger events in the nested coalescent and thence prove our main theorem. 

\subsection{Definition of the model}

We consider the following nested coalescent model. We begin with a sample of $n$ individuals from each of $s$ species (including the possibility that one or both of $n$ and $s$ is infinite). Each pair of individuals within a species merges at rate one; also, each pair of species merges at rate $c > 0$.  More formally, this process is a continuous-time Markov chain
taking its values in the set of labeled partitions of $\{(m,k) \in \Z \times \Z: 1 \leq m \leq n, 1 \leq k \leq s\}$, in which each block of the partition is labeled with one of the integers $1, \dots, s$.  At time zero, the partition consists of $ns$ singleton blocks, and the block $(m, k)$ is labeled by the integer $k$.  Two types of transition are possible:
\begin{description}
\item[Lineage mergers] Any pair of blocks with the same label may merge into a single block with that label, with rate $1$.
\item[Species mergers] For any pair of currently surviving labels $i < j$, all blocks with label $j$ have their label changed to $i$, with rate $c$.
\end{description}
We refer to this model as the nested Kingman coalescent because, both at the individual and species level, the merging follows the rule of the classical Kingman coalescent \cite{kingman82}. This model has appeared before in the literature in \cite{dawson18}.
This model can be alternatively seen as a coalescent process with values in the set of bivariate nested partitions.
It is actually an example of simple nested coalescents as defined in \cite{blancas18}.
In this reference, a criterion is provided to determine whether nested coalescents come down from infinity or not.
However, to our knowledge the speed of descent from infinity has not been computed previously. 
\begin{SCfigure}\label{drawing}
\includegraphics[width=0.35\textwidth]{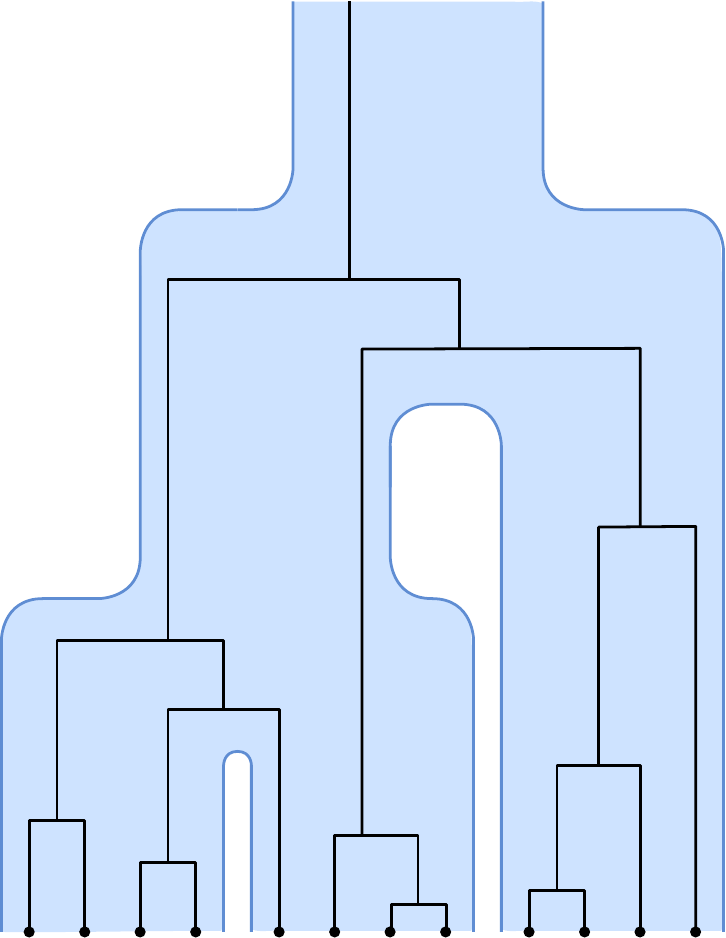}
\caption{Illustration of the nested Kingman coalescent starting with $s=3$ species and $n=4$ lineages per species. Black lines show a possible ancestral tree for the sampled individuals, with lineage mergers constrained to lie within the species tree (shown behind in pale blue). The species mergers are described by a Kingman coalescent with rate $c$, while the within species lineage mergers form a Kingman coalescent with rate $1$.}
\end{SCfigure}

The nested Kingman coalescent describes the genealogy in the following population model.  Consider a population divided into $s$ species, each composed of $N$ individuals. Within each species, the population evolves according to the classical Moran model \cite{moran58}. That is, each individual lives for an exponentially distributed time with mean $1$; when an individual dies, a new individual is born, and one of the $N$ individuals of the species is chosen at random to be the parent of the new individual.  To model the formation of new species, we also suppose that each species becomes extinct after an exponentially distributed time with rate $c(s-1)/N$, at which time all members of the species simultaneously die.  At that time, $N$ new individuals are born, forming a new species.  One of the $s$ species is chosen at random, and each member of that species gives birth to one member of the new species.  After scaling time by $N/2$, the genealogy of a sample consisting of $n$ individuals from each species converges to the nested Kingman coalescent in the limit as $N \rightarrow \infty$ because the large population size ensures that with probability tending to one as $N \rightarrow \infty$, the sampled ancestral lines will not merge at the times when new species form.
Similar to the standard Kingman coalescent, we expect that the nested Kingman coalescent will also appear as the asymptotic form of various other similar population models under suitable limits.  However, this is not the topic of our present study. 

\subsection{Main Results}
At time $t$ we write $S(t)$ for the number of species, and $N(t)$ for the total number of blocks (i.e. extant ancestral lines) across all species. Informally, our main result is that, if the initial number of species is large, then there is a period of time during the early evolution of the process in which $N(t)$ decays as $1/t^2$. Since the number of blocks in the standard Kingman coalescent decays as $1/t$, one can understand the $1/t^2$ decay observed in the nested process as a consequence of mergers occurring on both scales (individuals within a species, and whole species mergers) simultaneously. 

To state this claim precisely, it is necessary to consider a sequence of processes. For $j\in\mathbb{N}$,
consider
an instance of the nested Kingman coalescent in which the initial number of species is $s_j$ and the number of individuals sampled from each species is $n_j$ (which, for simplicity, is assumed to be the same for each species). We allow the cases in which $s_j = \infty$ or $n_j = \infty$.
Using the notation $a_j \ll b_j$ to mean $\lim_{j \rightarrow \infty} a_j/b_j = 0$, $\rightarrow_{\mathrm{p}}$ to denote convergence in probability, and $=_{\mathrm{d}}$ to denote equality of distributions, our main result is expressed in the following theorem. 

\begin{Theo}\label{Speed}
Suppose $1/s_j \ll t_j \ll 1$, and $1/\sqrt{n_j s_j}\ll t_j$. Then 
$$t_j^2 N(t_j) \rightarrow_{\mathrm{p}} \frac{2\gamma}{c} \hspace{.2in}\mbox{as } j \rightarrow \infty.$$  Here $\gamma$ is the mean of the uniquely determined random variable $W$ that takes values in $[2,\infty)$ and obeys the recursive distributional equation
\begin{equation}\label{RDE}
W=_{\mathrm{d}} \frac{2}{1 - U\big(1 - \frac{2}{W_1 + W_2}\big)},
\end{equation}
where $U$ has a uniform distribution on $[0,1]$, $W_1$ and $W_2$ have the same distribution as $W$, and the random variables $U$, $W_1$, and $W_2$ are independent.
\end{Theo}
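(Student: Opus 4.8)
The plan is to reduce the study of $N(t)$ to the behaviour of the number of lineages carried by a single typical species, and to identify the limiting rescaled count as the solution $W$ of the recursive distributional equation \eqref{RDE}. Two inputs drive everything. First, the species process is itself a Kingman coalescent run at rate $c$, so the coming-down-from-infinity results recalled above (applied with the hypothesis $1/s_j \ll t_j$, which guarantees that by time $t_j$ the species coalescent already resembles its entrance-from-infinity version) give $t\,S(t) \to 2/c$, i.e. $S(t) \approx 2/(ct)$. Second, between two consecutive species mergers a given species evolves as an autonomous rate-$1$ Kingman coalescent, for which the results of Section~\ref{sec:kingman} on the rate of decrease of the number of lineages yield the fluid relation $1/B(t') \approx 1/B(t) + (t'-t)/2$ for the number of lineages $B$ carried by that species. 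Writing $W := \lim t\,B(t)$ for a typical species, the goal is to show this limit exists, is distributed as the $W$ of \eqref{RDE}, and that averaging over the $\approx 2/(ct)$ species produces $t^2 N(t)\to 2\gamma/c$.

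The recursion itself comes from looking back from time $t$ to the most recent species merger, at time $s$, that produced the tagged species. The crucial distributional fact is that, in the entrance-from-infinity regime, the rescaled birth time $s/t$ is asymptotically uniform on $[0,1]$: new species are created at total rate $c\binom{S}{2}\approx 2/(cs^2)$, while a species born at time $s$ survives to $t$ with probability $\exp(-\int_s^t c(S(u)-1)\,du)\approx (s/t)^2$, and the product of these two factors is constant in $s$. This is the origin of the uniform variable $U$ in \eqref{RDE}. At time $s$ the tagged species is the amalgamation of two parent species whose rescaled lineage counts, by the asymptotic independence of branches established in Section~\ref{sec:kingman}, converge to independent copies $W_1,W_2$ of $W$; their pooled count just after the merger is $(W_1+W_2)/s$. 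Evolving this through the within-species fluid relation from $s=Ut$ to $t$ gives $t/W = Ut/(W_1+W_2) + (1-U)t/2$, which upon dividing by $t$ is precisely \eqref{RDE}. The constant then emerges from $E[N(t)]\approx S(t)\,E[B(t)]\approx (2/(ct))(\gamma/t)=2\gamma/(ct^2)$.

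Before this can be used I would establish that \eqref{RDE} is well posed: that the map sending the law of $W$ to the law of the right-hand side has a unique fixed point supported on $[2,\infty)$ with finite mean $\gamma$. The lower bound $W\ge 2$ is consistent with, and propagated by, the recursion (if $W_1,W_2\ge 2$ then $W_1+W_2\ge 4$ forces the right-hand side into $[2,\infty)$), reflecting that a species can only ever hold more lineages than an isolated Kingman coalescent. I would obtain existence, uniqueness and integrability by exhibiting the fixed-point map as a contraction in a suitable Wasserstein metric on probability measures on $[2,\infty)$, iterating from the point mass at $2$ and controlling the growth of the mean along the iteration.

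The main obstacle is making the heuristic limit rigorous, and this is where Section~\ref{sec:nested} does its work. One must show that the \emph{finite}-system rescaled count $t\,B(t)$ genuinely converges in distribution to the RDE solution, which amounts to a propagation-of-chaos statement: the two parental lineage counts feeding into each merger must be asymptotically independent and asymptotically distributed as $W$, uniformly well enough that the errors incurred at the (order $2/(ct)$ many) merger events back along a lineage do not accumulate. The fluid approximation for the within-species coalescent must likewise be controlled with errors that are summable along the recursion, using the quantitative decrease and independence estimates of Section~\ref{sec:kingman}; the hypothesis $1/\sqrt{n_j s_j}\ll t_j$ enters here, ensuring that the total number of surviving lineages, $\approx 2\gamma/(ct^2)$, stays far below the $n_j s_j$ lineages initially present, so that finite-sampling effects are negligible at both levels. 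Finally, to upgrade the convergence of $E[N(t)]$ to convergence in probability of $t^2 N(t)$, I would prove a law of large numbers across the $\approx 2/(ct)\to\infty$ species by a second-moment estimate, using the asymptotic independence of distinct species' lineage counts to show that $\Var(t^2 N(t))\to 0$.
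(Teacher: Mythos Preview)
Your proposal captures the paper's strategy faithfully at the structural level: the RDE is derived exactly as you describe, existence and uniqueness of the fixed point come from a contraction in the Wasserstein-$1$ (Kantorovich--Rubinstein) metric (Proposition~\ref{fixedpt}, matching your plan), and the final step is a law of large numbers over the $\approx 2/(ct)$ surviving species.

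There is, however, one genuine gap in how you propose to make the recursion rigorous. You speak of controlling errors that are ``summable along the recursion'' through the ``order $2/(ct)$ many merger events back along a lineage.'' But the number of species mergers in the ancestry of a tagged species between time $0$ and time $t$ is not of order $1/t$; it is unbounded (infinite when $s_j=\infty$), and the error from the fluid approximation at each step does not decay fast enough to be summed over all of them. The paper avoids this entirely by \emph{truncating at a fixed finite depth} $d$: it follows the recursion only $d$ generations back in the species tree, and at depth $d$ sandwiches the (unknown) rescaled lineage counts between the extreme values $2$ and $\infty$. Because the map $T$ is a strict contraction, the distributions $T^d(\delta_2)$ and $T^d(\delta_\infty)$ both have means within $\eta$ of $\gamma$ once $d$ is large, and one then only needs to control finitely many error terms (one per level $0\le\ell\le d-1$; this is Lemma~\ref{WWstar}). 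This finite-depth sandwich is the device that makes the argument close, and it is absent from your sketch.

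Two smaller differences. First, the ``asymptotic independence of branches'' you cite from Section~\ref{sec:kingman} is realised in the paper very concretely: a deterministic random time change converts the reversed species coalescent into a Yule process, in which the rescaled split times are \emph{exactly} i.i.d.\ uniforms (Lemma~\ref{Yulelem}); the work then goes into showing that this time change is a small perturbation of the identity (Lemmas~\ref{timechange} and~\ref{timechange2}). Second, the paper does not compute $\Var(t^2 N(t))$ for the law of large numbers. Because the Yule-time-changed upper and lower bounds $W^{k,m,*,\pm}$ are genuinely i.i.d.\ across $k$, the ordinary law of large numbers applies directly to them, and the conclusion is transferred back to the true per-species counts $W^{k,m}$ via the comparison lemmas (Lemma~\ref{Wgamma}). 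Your second-moment route could plausibly be made to work, but it would require a covariance estimate between distinct species that the paper's construction sidesteps.
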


When $s_j \equiv \infty$, Theorem \ref{Speed} implies that $$t^2 N(t) \rightarrow_{\mathrm{p}} \frac{2 \gamma}{c} \hspace{.2in}\mbox{as } t \rightarrow 0.$$  Therefore, in this case Theorem~\ref{Speed} gives the speed at which $(N(t), t \geq 0)$ descends from infinity. 
Note that the hypotheses of Theorem \ref{Speed} require $s_j \to \infty$, but not necessarily that $n_j \to \infty$.  For example, the case $n_j \equiv 1$, which corresponds to sampling one individual of each species, is included.  When $n_j$ equals some fixed constant $n$ for all $j$, Theorem \ref{Speed} implies that, for any fixed $t>0$ and $\beta\in(0,1/2)$, $$\frac{N(ts^{-\beta})}{s^{2\beta}} \rightarrow_{\mathrm{p}} \frac{2 \gamma}{ct^2} \hspace{.2in}\mbox{as } s \rightarrow \infty.$$ 
This scaling can be compared with non-nested models such as Beta-coalescents (see Theorem 4.4 of \cite{DFSY}).

In the case that the initial number of lineages per species vastly exceeds the number of species ($n_j\gg s_j$), the period of $1/t^2$ scaling implied by Theorem \ref{Speed} is preceded by an earlier phase dominated entirely by within-species coalescence. There, the usual $1/t$ scaling is recovered, as we make explicit in the following proposition.

\begin{Prop}\label{newprop}
Suppose $s_j \rightarrow \infty$ and $1/n_j \ll t_j \ll 1/s_j$.  Then $$\frac{t_j N(t_j)}{s_j} \rightarrow_{\mathrm{p}}2 \hspace{.2in}\mbox{as } j \rightarrow \infty.$$
\end{Prop}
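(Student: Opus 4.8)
The plan is to sandwich $N(t_j)$ between two quantities that both concentrate around $2s_j/t_j$. Write $K^{(1)},\dots,K^{(s_j)}$ for the within-species Kingman coalescents (each started from $n_j$ lineages and run at rate $1$), and set $\tilde N(t)=\sum_{i=1}^{s_j}K^{(i)}(t)$ for the total number of lineages one would see if species never merged. The heuristic is that on the time scale $t_j\ll 1/s_j$ hardly any species mergers have occurred, so $N(t_j)$ should be close to $\tilde N(t_j)$; and since $t_j\gg 1/n_j$, each $K^{(i)}(t_j)$ has come down from infinity to roughly $2/t_j$. I would therefore first reduce the statement to the claim $t_j\tilde N(t_j)/s_j\rightarrow_{\mathrm{p}}2$ together with control of the discrepancy coming from species mergers.

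For the independent sum, the $K^{(i)}$ are i.i.d., so this is a law of large numbers. Using the first- and second-moment estimates for a single Kingman coalescent from Section~\ref{sec:kingman} --- namely $t_j\,\mathbb E[K^{(1)}(t_j)]\to 2$ and $\Var(K^{(1)}(t_j))=O(1/t_j^2)$, valid because $n_j\to\infty$ and $t_j\to 0$ with $t_j\gg 1/n_j$ --- one gets $\mathbb E[\tilde N(t_j)]\sim 2s_j/t_j$ and $\Var(\tilde N(t_j))=s_j\,O(1/t_j^2)=o(s_j^2/t_j^2)$, so Chebyshev yields $t_j\tilde N(t_j)/s_j\rightarrow_{\mathrm{p}}2$. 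Separately, the species-level process is itself a Kingman coalescent on $s_j$ labels at rate $c$, run independently of the lineage clocks, so the number of species mergers $M_j:=s_j-S(t_j)$ up to time $t_j$ satisfies $\mathbb E[M_j]\le c\binom{s_j}{2}t_j=O(s_j^2t_j)=o(s_j)$; hence $M_j=o_{\mathrm{p}}(s_j)$.

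The lower bound is the cleanest step. Call a species \emph{pure} if it is never involved in a species merger up to time $t_j$; a short counting argument (after $M_j$ mergers the non-singleton blocks contain at most $2M_j$ original species) shows the number of pure species $P_j$ satisfies $P_j\ge s_j-2M_j=s_j(1-o_{\mathrm{p}}(1))$. Crucially, the set of pure species is measurable with respect to the species-level coalescent and is therefore independent of the lineage clocks; moreover a pure species evolves exactly as a stand-alone Kingman coalescent, since no cross-species coalescence ever affects it. Consequently, conditionally on the species process the lineage counts of the pure species are i.i.d.\ copies of $K^{(1)}(t_j)$, and since $N(t_j)\ge\sum_{\text{pure }i}K^{(i)}(t_j)$, the same law of large numbers gives $N(t_j)\ge(2s_j/t_j)(1-o_{\mathrm{p}}(1))$.

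The upper bound, $N(t_j)\le(2s_j/t_j)(1+o_{\mathrm{p}}(1))$, is where the real work lies, and I expect it to be the main obstacle. The natural statement is the monotonicity $N(t)\le\tilde N(t)$: merging two species only pools their lineages and thereby allows additional cross-species coalescences, which can only decrease the total number of blocks. Although intuitively obvious, turning this into a pathwise inequality requires a careful coupling, because the within-species dynamics is a Kingman coalescent at rate $1$ per pair of \emph{blocks} (not per pair of lineages), so one cannot simply drive both processes by a common family of lineage clocks. I would instead build a coupling on partitions maintaining the invariant that the nested lineage partition is always a coarsening of the independent one, inserting the extra cross-species mergers as they become allowed; the delicate point is matching the block-merger rates so that each process remains a bona fide (constrained) Kingman coalescent. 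An alternative, should the coupling prove awkward, is to bound the merged contribution directly: the merged blocks contain at most $2M_j=o_{\mathrm{p}}(s_j)$ original species, and one must show each such block has come back down to $O(1/t_j)$ lineages by time $t_j$ despite the pooling --- again a consequence of rapid coming-down-from-infinity after each merger --- giving a merged contribution of order $M_j/t_j=o_{\mathrm{p}}(s_j/t_j)$. Combining the two bounds with the limit of the second paragraph yields $t_jN(t_j)/s_j\rightarrow_{\mathrm{p}}2$.
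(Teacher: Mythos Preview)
Your sandwich is the paper's: the upper comparison process $\tilde N=N^+$ (suppress all species mergers) and a lower bound coming from a subset of original species whose lineages are independent Kingman coalescents. The differences are only in execution. The paper controls $\tilde N(t_j)$ via the $L^1$ estimate of Lemma~\ref{kinglem} and Markov's inequality rather than second moments and Chebyshev; this is slightly more economical and is exactly what Section~\ref{sec:kingman} is set up to deliver. For the lower bound the paper does not restrict to pure species but instead, for each of the $S(t_j)$ surviving species, keeps the lineages from the single original species carrying the surviving (minimum) label; by consistency of Kingman's coalescent these $S(t_j)$ restricted processes are independent Kingman coalescents on $n_j$ lineages, and $S(t_j)\ge(1-\varepsilon)s_j$ with high probability by the same $\mathbb E[M_j]=o(s_j)$ estimate you use. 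Your pure-species variant is a perfectly good alternative and arguably conceptually simpler. Finally, the paper asserts the stochastic domination $N(t)\le_{\mathrm{st}}N^+(t)$ in one sentence; your caution is justified, but the coupling is lighter than you suggest: since $\binom{a+b}{2}\ge\binom{a}{2}+\binom{b}{2}$, the pooled block-count death chain after a species merger can be coupled below the sum of the two separate ones, and iterating over the species mergers gives the inequality at the level of block counts, which is all that is needed.
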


The proof of Proposition \ref{newprop} is given in Section \ref{newpropproof}, and a numerical example showing both $1/t$ and $1/t^2$ phases is shown in Figure \ref{bothphases}. Different techniques have to be developed when $t$ is of the order $1/s_j$, see \cite{schertzer18}.

\subsection{Heuristics and simulations}
Before presenting our proofs, it is instructive to consider a simple mean-field heuristic for the time-evolution of the process.  For the purposes of this discussion, we will focus on the case when $n \gg s \gg 1$.  First note that the process $(S(t), t \geq 0)$ has the same law as the number of blocks in Kingman's coalescent (with time scaled by a factor of $c$).  Therefore (following \cite{aldous99}), for small times we can approximate $S(t)$ by the solution to the differential equation $$\frac{d}{dt} S(t) \approx - \frac{c S(t)^2}{2}, \hspace{.5in} S(0) = s.$$  It follows that when $t \ll 1$, we have
\begin{equation}\label{Seq}
S(t) \approx \frac{2}{ct + \frac{2}{s}} \,.
\end{equation}
We have $N(t) = N_1(t) + \dots + N_{S(t)}(t)$, where $N_i(t)$ denotes the number of lineages belonging to the $i$th of the $S(t)$ species at time $t$.  When $t \ll 1/s$, we see from (\ref{Seq}) that $S(t) \approx s$, which means very few species mergers have occurred.  Within each species, the lineages are merging according to Kingman's coalescent.  Therefore, during this period, $N_i(t)$ can be approximated by the solution to the differential equation
$$\frac{d}{dt} N_i(t) \approx - \frac{N_i(t)^2}{2}, \hspace{.5in}N_i(0) = n.$$
It follows that
\begin{equation}\label{Niapprox}
N_i(t) \approx \frac{2}{t + 2/n}
\end{equation}
for $t \ll 1/s$ and, in particular, $N_i(t) \approx 2/t$ when $1/n \ll t \ll 1/s$.  Consequently, we should have $N(t) \approx 2s/t$ when $1/n \ll t \ll 1/s$, which is consistent with Proposition \ref{newprop}.

Note, however, that the number of lineages belonging to a given species will jump upwards when two species merge into one.  Consequently, once species mergers start to occur around times of order $1/s$, we can no longer approximate the quantities $N_i(t)$ by solutions to a differential equation.  Indeed, these random variables will no longer be well approximated by their expectation, due to the randomness resulting from the timing of the species mergers.  Instead, we will argue that when $1/s \ll t \ll 1$, the distribution of $N_i(t)$ is well approximated by the distribution of $W/t$, where $W$ satisfies the recursive distributional equation (\ref{RDE}).  The Law of Large Numbers then suggests the approximation
\begin{equation}\label{Napprox}
N(t) \approx S(t) \cdot \frac{E[W]}{t} \approx \frac{2}{ct} \cdot \frac{\gamma}{t} = \frac{2 \gamma}{c t^2},
\end{equation}
which matches the result of Theorem \ref{Speed}.  Therefore, we see the possibility of both $1/t$ and $1/t^2$ behaviour, depending on the parameters.
Figure \ref{bothphases} shows an example simulation of the nested Kingman coalescent in which both phases of decay are visible.
Figure \ref{nestedc} shows several example simulations for different values of $c$, compared to the asymptotic result (\ref{Napprox}).

\begin{SCfigure}
\includegraphics[width=0.5\textwidth]{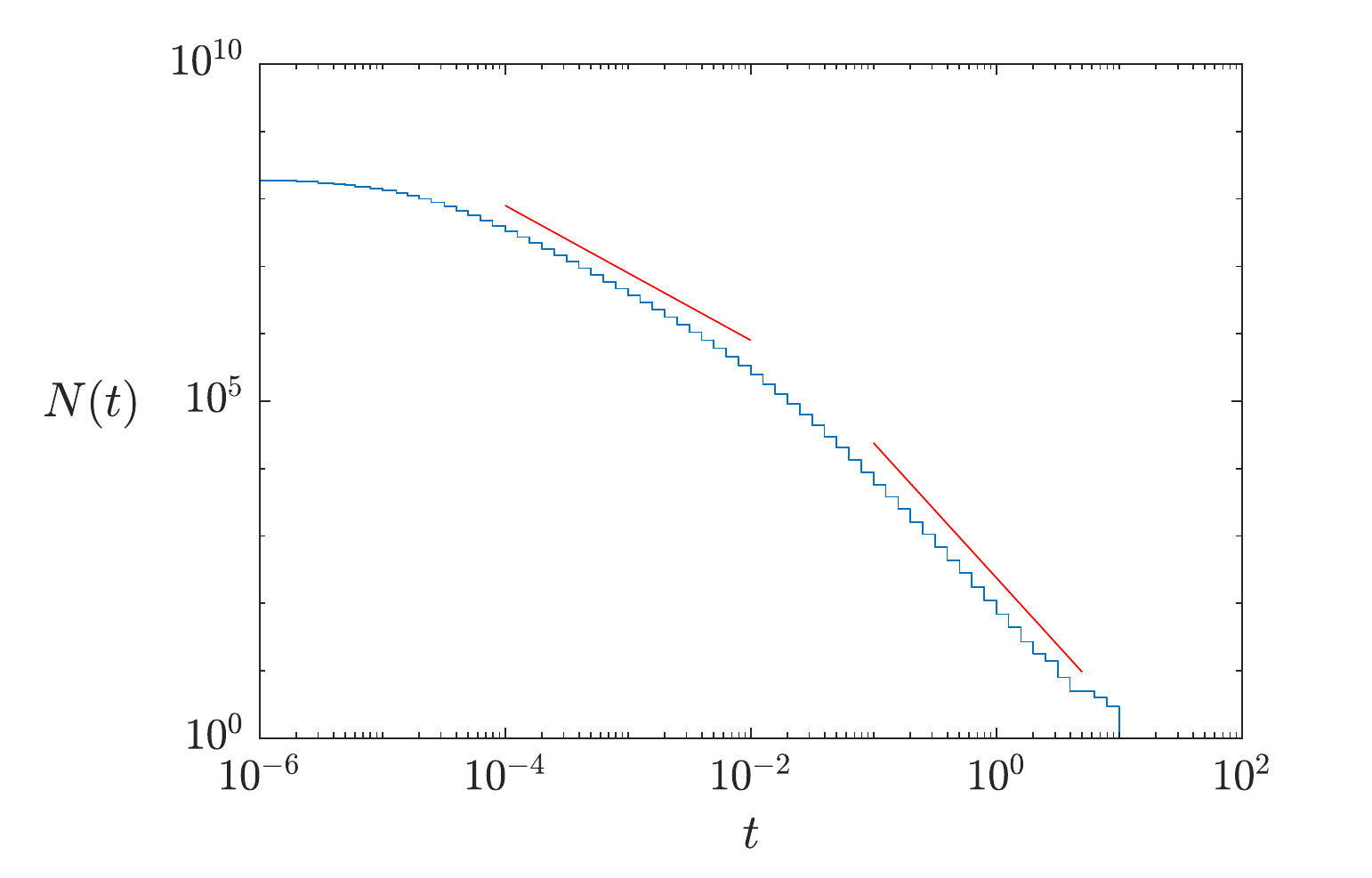}\label{bothphases}
\caption{Simulation of the nested Kingman coalescent in the case $n\gg s \gg 1$. Here the $1/t^2$ phase is preceded by a period of $1/t$ decay, corresponding to the coalescence occurring within species, but before the species coalescence events kick in. The blue line shows the result of a single simulation with $s=2000, n=100000, c=0.1$, the red lines indicate slopes of $-1$ and $-2$ to illustrate the different scaling regimes.}
\end{SCfigure}

\begin{SCfigure}\label{nestedc}
\includegraphics[width=0.5\textwidth]{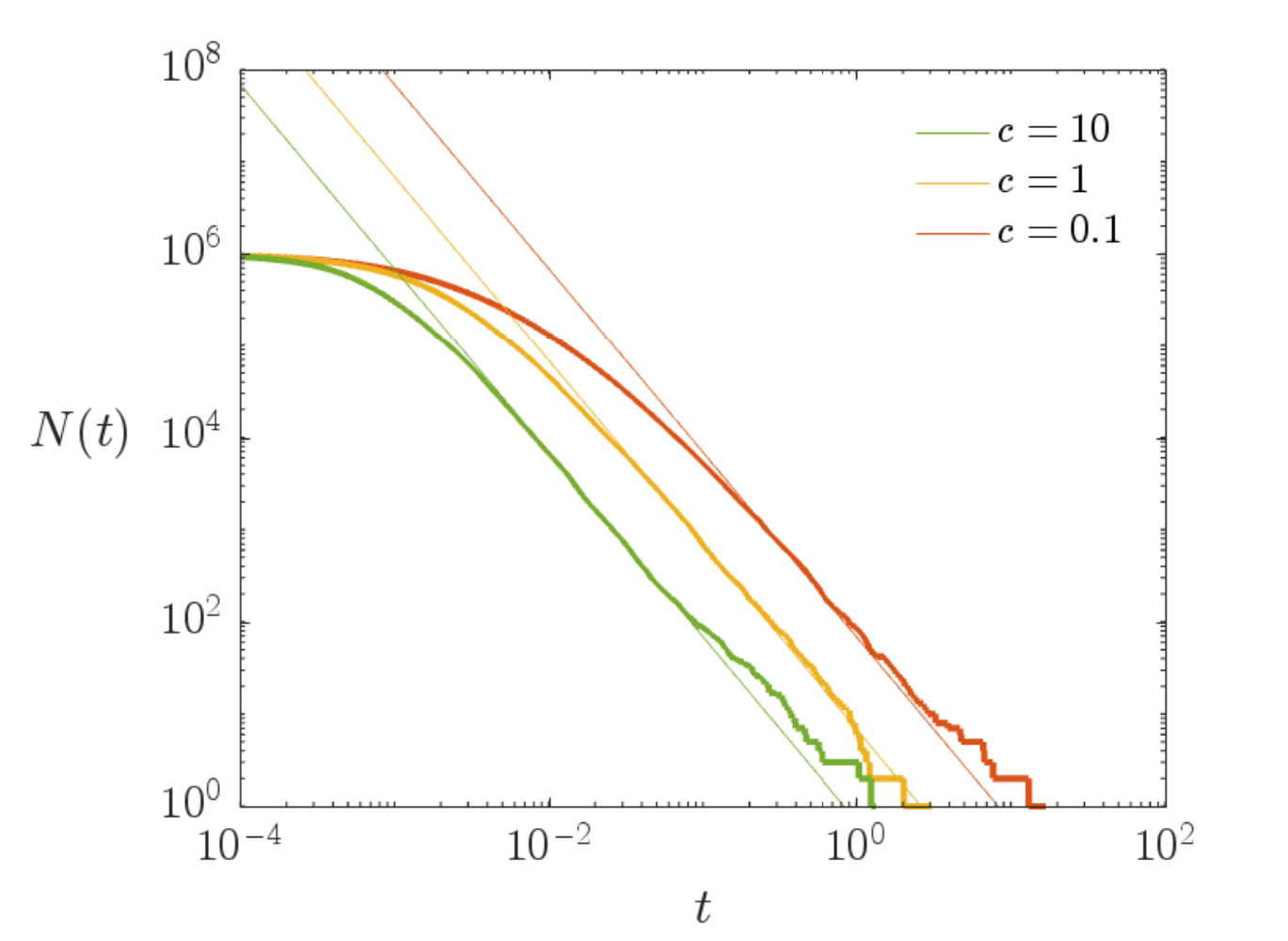}
\caption{Simulations of the nested Kingman coalescent for various values of $c$, compared to the asymptotic form $N(t)\sim 2\gamma/ct^2$ given in Theorem \ref{Speed}. In each case the simulation was started with $s=1000$ species each with $n=1000$ members.}
\end{SCfigure}

To understand the recursive distributional equation \eqref{RDE}, we consider choosing at random one of the $S(t)$ species at time $t$.
We then look for the last species merger in the species subtree rooted at this individual at time $t$.  It is well-known that this species merger happens at time $Ut$, where the distribution of $U$ is approximately uniform on $[0, 1]$, as we will explain in more detail in section~\ref{KingmanYule} below.  Then, at time $Ut$, we merge two species with $W_1/Ut$ and $W_2/Ut$ individual lineages respectively, where $W_1$ and $W_2$ are independent and have the same distribution as $W$.  Because the resulting $(W_1 + W_2)/Ut$ lineages then merge as in Kingman's coalescent for the remaining $(1-U)t$ time, the number of lineages left at time $t$ is given by the right-hand side of (\ref{Niapprox}) with $(W_1 + W_2)/Ut$ in place of $n$ and $(1-U) t$ in place of $t$.  That is, we get the approximation $$N_i(t) \approx \frac{2}{(1 - U)t + \frac{2U t}{W_1 + W_2}} = \frac{1}{t} \cdot \frac{2}{1 - U\big(1 - \frac{2}{W_1 + W_2}\big)} \,.$$  Writing $N_i(t) \approx W/t$ leads to (\ref{RDE}).

Straightforward bounds on the constant $\gamma$ can be obtained based on the conditional expectation 
\begin{equation}
E[W|W_1,W_2]=\int_0^1 \frac{2}{1 - u(1 - 2/(W_1 + W_2))}  \,\textrm{d}u =\frac{2}{\frac{2}{W_1+W_2}-1}\log\left(\frac{2}{W_1+W_2}\right)\,.
\label{EWcond}
\end{equation}
On the one hand, we know that $W_1,W_2>2$, and hence we obtain 
\begin{equation}
\gamma>E[W|W_1=W_2=2]=4\log(2)\approx 2.7726 \,.
\end{equation}
On the other hand, the right-hand-side of (\ref{EWcond}) is a concave function of the sum $W_1+W_2$, which has expectation $2\gamma$, hence by Jensen's inequality we must have
\begin{equation}
\gamma<\frac{2}{1/\gamma-1}\log\left(\frac{1}{\gamma}\right)\,.
\end{equation}
Solving at equality we obtain the upper bound
\begin{equation}
\gamma<-2W_{-1}(-1/2\sqrt{e})\approx 3.5129\,,
\end{equation}
where $W_{-1}$ denotes the lower branch of the Lambert W function. 

We have also simulated from the distribution of $W$ by constructing binary trees of height 12 and using the ``recursive tree process" discussed in more detail in section 2.3 of \cite{aldous05}.  Two random variables $W_L$ and $W_U$ were obtained from each run of the procedure.  The random variable $W_L$ was obtained by starting with values of $2$ at the leaf notes, while $W_U$ was obtained by starting with $\infty$ at the leaf nodes.  The same uniform random variables were used to obtain $W_L$ and $W_U$, which ensured that $W_L \leq W_U$.  Furthermore, $W$ stochastically dominates $W_L$ and is stochastically dominated by $W_U$.  This procedure was repeated 10,000,000 times.  The values for $W_L$ had a mean of 3.4466, and the values for $W_U$ had a mean of 3.4467.  The standard error of these estimates was .0009, which means we can be 95 percent confident that
$3.4457 < \gamma < 3.4476.$

\section{Results on Kingman's coalescent}\label{sec:kingman}

\subsection{Estimates on the number of blocks}

Let $(\Pi_{\infty}(t), t \geq 0)$ be Kingman's coalescent \cite{kingman82}, which is a stochastic process taking its values in the set of partitions of $\N$, and let $(\Pi_n(t), t \geq 0)$ be the restriction of $(\Pi_{\infty}(t), t \geq 0)$ to $\{1, \dots, n\}$.  Recall Kingman's coalescent is defined by the property that, for each $n$, the process $(\Pi_n(t), t \geq 0)$ is a continuous-time Markov chain such that each transition that involves two blocks of the partition merging together happens at rate one, and no other transitions are possible.  Let $K_n(t)$ denote the number of blocks of the partition $\Pi_n(t)$, and let $K_{\infty}(t)$ denote the number of blocks of $\Pi_{\infty}(t)$.  Theorem 1 of \cite{berestycki10} (see also \cite{aldous99}) states that 
\begin{equation}\label{cdi}
\lim_{t \rightarrow 0} t K_{\infty}(t) = 2 \hspace{.2in} \textup{a.s.}
\end{equation}
Theorem 2 of \cite{berestycki10} implies that for all $d \geq 1$,
\begin{equation}\label{kingmom}
\lim_{t \rightarrow 0} E \bigg[ \bigg| \frac{t K_{\infty}(t)}{2} - 1 \bigg|^d \bigg] = 0.
\end{equation}
Our next result provides a first moment estimate for the coalescent started with $n$ blocks.

\begin{Lemma}\label{kinglem}
Let $\delta > 0$.  There exists a positive number $t_0$ and a positive integer $M$, both depending on $\delta$, such that for all $t \leq t_0$ and $n \geq M$, we have
$$E \bigg[ \bigg| K_n(t) - \frac{2}{t + 2/n} \bigg| \bigg] \leq \frac{\delta}{t + 2/n}.$$
\end{Lemma}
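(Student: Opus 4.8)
The plan is to compare $K_n$ with the infinite coalescent $K_\infty$ through a hitting-time coupling, exploiting the fact that the target $2/(t+2/n)$ is exactly the solution of the mean-field equation $m' = -m^2/2$ with $m(0)=n$. Writing $u = t + 2/n$, I want $E|K_n(t) - 2/u| \le \delta/u$. Since $K_\infty$ descends from infinity by unit steps, its hitting time $T_n = \inf\{s : K_\infty(s) = n\}$ is well defined with $K_\infty(T_n)=n$, and by the strong Markov property the post-$T_n$ block count $(K_\infty(T_n + s))_{s\ge 0}$ is itself a Kingman block-counting process started from $n$ blocks. Hence $K_\infty(T_n + t) =_{\mathrm{d}} K_n(t)$, so $E|K_n(t) - 2/u| = E|K_\infty(T_n + t) - 2/u|$ and the whole estimate can be carried out on the $K_\infty$ side.

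First I would establish that the coupling time is well concentrated: since $K_\infty(T_n) = n$ we have $n T_n = T_n K_\infty(T_n)$, and because $T_n \downarrow 0$ almost surely (indeed $E[T_n] = 2/n$) the almost-sure statement \eqref{cdi} yields $n T_n \to 2$ a.s. Fix a small $\eta > 0$ and set $A_\eta = \{(1-\eta)\,2/n \le T_n \le (1+\eta)\,2/n\}$, so $P(A_\eta^c) \to 0$. On $A_\eta$ the random evaluation time $v = T_n + t$ lies between the deterministic times $a = t + (1-\eta)2/n$ and $b = t + (1+\eta)2/n$, and monotonicity of $K_\infty$ gives $K_\infty(b) \le K_\infty(v) \le K_\infty(a)$, so $|K_\infty(v) - 2/u| \le |K_\infty(a) - 2/u| + |K_\infty(b) - 2/u|$. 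Taking expectations reduces the $A_\eta$ contribution to deterministic-time quantities, controlled by \eqref{kingmom} with $d=1$: writing $|K_\infty(a) - 2/u| \le |K_\infty(a) - 2/a| + |2/a - 2/u|$, the first piece has expectation $(2/a)\,E|aK_\infty(a)/2 - 1| \le 2\delta'/((1-\eta)u)$ for $a$ small (using $a \ge (1-\eta)u$), while the second is deterministic and bounded by $C\eta/u$ since $|u-a| = \eta\,2/n$ and $a$ is comparable to $u$; the same holds for $b$.

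The bad event $A_\eta^c$ is the delicate point, because the crude bound $|K_\infty(v) - 2/u| \le n$ combined with $P(A_\eta^c)\to 0$ is insufficient: the prefactor $n$ can overwhelm $1/u$ when $nt$ is large. Instead I would apply Cauchy–Schwarz, $E[|K_\infty(v) - 2/u|\,\1_{A_\eta^c}] \le \sqrt{E[(K_\infty(v) - 2/u)^2]}\,\sqrt{P(A_\eta^c)}$, and bound the second moment uniformly by $C''/u^2$. This rests on the a priori inequality $K_n(t) \le \min(n, K_\infty(t))$ from the restriction coupling, transported to the $K_\infty$ side by the distributional identity: together with the $d=2$ case of \eqref{kingmom}, which gives $E[K_\infty(t)^2] \le 8/t^2$ for small $t$, treating the regimes $t \ge 2/n$ and $t < 2/n$ separately yields $E[\min(n,K_\infty(t))^2] \le C''/u^2$, hence $E[(K_n(t)-2/u)^2] \le C'''/u^2$. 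The $A_\eta^c$ term is therefore at most $(\sqrt{C'''}/u)\sqrt{P(A_\eta^c)}$, which is $\le \delta/(3u)$ once $n$ is large.

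Combining the pieces bounds $E|K_n(t) - 2/u|$ by $\big(\mathrm{const}\cdot(\delta' + \eta) + \sqrt{C'''}\sqrt{P(A_\eta^c)}\big)/u$; choosing $\eta$ small, then $t_0$ small (so that the $\delta'$ from \eqref{kingmom} is small at all times $\le t_0 + (1+\eta)2/M$), and finally $M$ large (so that $P(A_\eta^c)$ is small) makes the bracket $\le \delta$, proving the lemma. I expect the main obstacle to be exactly the transfer of the deterministic-time $L^d$ control of \eqref{kingmom} to the random time $v = T_n + t$: the monotonicity sandwich on $A_\eta$ and the second-moment Cauchy–Schwarz estimate on $A_\eta^c$ are the two devices that make this transfer rigorous while keeping every error term of the correct order $1/u$.
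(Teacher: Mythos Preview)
Your argument is correct and follows essentially the same route as the paper: both proofs couple $K_n$ to $K_\infty$ via the hitting time of level $n$, use the concentration $T_n \approx 2/n$ coming from \eqref{cdi}, and on the good event reduce to the deterministic-time $L^1$ control \eqref{kingmom}. The paper phrases this as explicit stochastic upper and lower bounds $Y_1 \preccurlyeq K_n(t) \preccurlyeq Y_2$ built from $K_\infty(t + 2(1\pm\eps)/n)$, and then bounds $(K_n(t) - 2/u)^\pm$ separately, whereas you use the exact identity $K_n(t) =_{\mathrm d} K_\infty(T_n + t)$ and a monotonicity sandwich; these are the same idea in slightly different packaging.

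The one genuine technical difference is the treatment of the complement $A_\eta^c$. You invoke Cauchy--Schwarz together with the $d=2$ case of \eqref{kingmom} to get a second-moment bound $E[(K_n(t)-2/u)^2] \le C/u^2$. The paper instead stays at first moments: on the bad event it bounds $K_n(t)$ by $\min\{n, K'_\infty(t)\}$ for an independent copy $K'_\infty$, then uses the elementary inequality $\min\{a/b,\,c/d\} \le (a+c)/(b+d)$ to obtain $\min\{n,\,(2+\eps)/t\}\cdot\eps \le 2(2+\eps)\eps/(t+2/n)$ directly. Your route is slightly heavier (it needs the $d=2$ moment bound) but perfectly valid; the paper's route is a touch more elementary but requires the min-of-fractions trick. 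Either way the bad-event contribution lands at order $\eps/u$, and the lemma follows.
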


\begin{proof} 
Let $0 < \eps < 1$.  By (\ref{kingmom}) with $d = 1$, there exists $t_1 > 0$, depending on $\eps$, such that if $t \leq t_1$ then
\begin{equation}\label{bblexp}
E \bigg[ \bigg| K_{\infty}(t) - \frac{2}{t} \bigg| \bigg] \leq \frac{\eps}{t}.
\end{equation}
Also, (\ref{cdi}) implies that for sufficiently large $n$,
\begin{equation}\label{bblprob}
P \bigg( K_{\infty} \Big( \frac{2(1 + \eps)}{n} \Big) \leq n \leq K_{\infty} \Big( \frac{2(1 - \eps)}{n} \Big) \bigg) > 1 - \eps.
\end{equation}

The random variable $K_n(t)$ is stochastically bounded from below by a random variable $Y_1$, which equals $K_{\infty}(t + 2(1 + \eps)/n)$ on the event that $K_{\infty}(2(1 + \eps)/n) \leq n$ and zero otherwise.  Then, denoting the positive and negative parts of a random variable $X$ by $X^+$ and $X^-$ and using (\ref{bblexp}) and (\ref{bblprob}), we get that if $t + 2(1 + \eps)/n \leq t_1$ and $n$ is sufficiently large, then
\begin{align*}
E \bigg[ \bigg(K_n(t) - \frac{2}{t + 2/n} \bigg)^- \bigg] &\leq E \bigg[ \bigg(Y_1 - \frac{2}{t + 2/n} \bigg)^- \bigg] \\
&\leq E \bigg[ \bigg| K_{\infty} \Big( t + \frac{2(1 + \eps)}{n} \Big) - \frac{2}{t + 2/n} \bigg| \bigg] \\
&\hspace{1in} + \frac{2}{t + 2/n} P \bigg(K_{\infty} \Big(t + \frac{2(1 + \eps)}{n} \Big) > n \bigg) \\
&\leq \frac{\eps}{t + 2(1 + \eps)/n} + \bigg( \frac{2}{t + 2/n} - \frac{2}{t + 2(1 + \eps)/n} \bigg) + \frac{2 \eps}{t + 2/n} \\
&\leq \frac{3 \eps}{t + 2/n} + \frac{2}{(t + 2/n)^2} \cdot \frac{2 \eps}{n} \\
&\leq \frac{5 \eps}{t + 2/n}.
\end{align*}

Let $(K'_{\infty}(t), t \geq 0)$ be an independent copy of the process $(K_{\infty}(t), t \geq 0)$.
The random variable $K_n(t)$ is stochastically bounded from above by a random variable $Y_2$ that equals $K_{\infty}(t + 2(1-\eps)/n)$ on the event that $K_{\infty}(2(1-\eps)/n) \geq n$.  On the event that $K_{\infty}(2(1 - \eps)/n) \leq n$, we set $Y_2 = n$ if $n \leq (2 + \eps)/t$ and $Y_2 = K_{\infty}'(t)$ otherwise.  If $t + 2(1-\eps)/n \leq t_1$ so that (\ref{bblexp}) can be applied and $n$ is large enough that (\ref{bblprob}) holds, then using that $\min\{\frac{a}{b},\frac{c}{d}\}\leq\frac{a+c}{b+d}$ for fractions of positive numbers to get the third inequality, we have
\begin{align*}
&E \bigg[ \bigg(K_n(t) - \frac{2}{t + 2/n} \bigg)^+ \bigg] \\
&\hspace{.5in}\leq E \bigg[ \bigg| K_{\infty} \Big( t + \frac{2(1 - \eps)}{n} \Big) - \frac{2}{t + 2/n} \bigg| \bigg] \\
&\hspace{1in}+ \big( n \1_{\{n \leq (2 + \eps)/t\}} + E[K_{\infty}'(t)]\1_{\{n > (2 + \eps)/t\}} \big) P \bigg(K_{\infty} \Big(\frac{2(1 - \eps)}{n} \Big) > n \bigg) \\
&\hspace{.5in}\leq \frac{\eps}{t + 2(1-\eps)/n} + \bigg( \frac{2}{t + 2(1 - \eps)/n} - \frac{2}{t + 2/n} \bigg) + \min\bigg\{n, \frac{2 + \eps}{t} \bigg\} \eps \\
&\hspace{.5in}\leq \frac{\eps}{(1 - \eps)(t + 2/n)} + \frac{2 \eps}{(1 - \eps)(t + 2/n)} + \frac{2(2 + \eps)\eps}{t + 2/n} \\
&\hspace{.5in}\leq \bigg(\frac{3 \eps}{1 - \eps} + 2(2 + \eps)\eps \bigg) \frac{1}{t + 2/n}.
\end{align*}
Combining these results gives that if $t + 2(1 + \eps)/n \leq t_1$ and $n$ is sufficiently large, we have $$E \bigg[ \bigg| K_n(t) - \frac{2}{t + 2/n} \bigg| \bigg] \leq \bigg(5 \eps + \frac{3 \eps}{1 - \eps} + 2(2 + \eps)\eps \bigg) \frac{1}{t + 2/n}.$$  The result follows.
\end{proof}

\begin{Cor}\label{kingcor}
Let $\delta > 0$, and choose $t_0$ and $M$ as in Lemma \ref{kinglem}.  Then for all $\eps \in (0,1)$, $t \leq t_0$, and $n \geq M$ such that $\eps > 2/(nt)$, we have
$$P \bigg( K_n(t) < \frac{2(1 - \eps)}{t} \bigg) \leq \frac{\delta}{\eps - 2/(nt)}.$$
\end{Cor}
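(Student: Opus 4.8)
The plan is to derive this tail bound from the first-moment estimate of Lemma~\ref{kinglem} by a Markov inequality applied to a suitable nonnegative random variable. Write $X = K_n(t) - \frac{2}{t + 2/n}$ and consider its negative part $X^- = \max\{0, \frac{2}{t+2/n} - K_n(t)\}$. Since $X^- \leq |X|$, Lemma~\ref{kinglem} immediately gives $E[X^-] \leq \delta/(t + 2/n)$ whenever $t \leq t_0$ and $n \geq M$. The whole argument then rests on choosing the right threshold at which to apply Markov's inequality to $X^-$.

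The key step is to observe that the event $\{K_n(t) < 2(1-\eps)/t\}$ forces $X^-$ to exceed the deterministic quantity
$$\theta := \frac{2}{t + 2/n} - \frac{2(1 - \eps)}{t}.$$
Indeed, on that event we have $\frac{2}{t+2/n} - K_n(t) > \theta$, so provided $\theta > 0$ this gives $X^- > \theta$, and hence $\{K_n(t) < 2(1-\eps)/t\} \subseteq \{X^- > \theta\}$. Before applying Markov I would therefore verify positivity of $\theta$: writing $x = 2/(nt)$, a short computation yields $\theta = \frac{2}{t+2/n}\big(\eps - x(1 - \eps)\big)$, and since the hypothesis $\eps > 2/(nt) = x$ gives $\eps - x(1-\eps) \geq \eps - x > 0$, the threshold is strictly positive exactly under the stated assumption.

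Finally I would combine these facts. Markov's inequality gives
$$P\bigg(K_n(t) < \frac{2(1-\eps)}{t}\bigg) \leq P(X^- > \theta) \leq \frac{E[X^-]}{\theta} \leq \frac{\delta/(t + 2/n)}{\theta},$$
and substituting the lower bound $\theta \geq \frac{2}{t+2/n}(\eps - 2/(nt))$ from the previous step cancels the common factor $t + 2/n$ and leaves $\frac{\delta}{2(\eps - 2/(nt))}$, which is in fact slightly stronger than the claimed bound $\frac{\delta}{\eps - 2/(nt)}$. The only real care needed is in the threshold computation: one must check that the single hypothesis $\eps > 2/(nt)$ is precisely what guarantees both that $\theta > 0$ (so the event inclusion is valid) and that $\theta$ admits the clean lower bound in terms of $\eps - 2/(nt)$. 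Everything else is a direct invocation of Lemma~\ref{kinglem}, so I expect no serious obstacle beyond keeping this elementary algebra straight.
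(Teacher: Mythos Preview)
Your argument is correct and follows essentially the same route as the paper: both rewrite the event $\{K_n(t) < 2(1-\eps)/t\}$ as a lower-tail deviation of $K_n(t)$ from $2/(t+2/n)$ by the threshold $\theta = \frac{2}{t+2/n}\big(\eps - (1-\eps)\cdot 2/(nt)\big)$, then apply Markov's inequality with the first-moment bound from Lemma~\ref{kinglem}. The only cosmetic differences are that you work with the negative part $X^-$ while the paper uses $|X|$, and that you simplify the resulting fraction to $\frac{\delta}{2(\eps-2/(nt))}$ (indeed a factor of two sharper) whereas the paper relaxes to $\frac{\delta}{\eps-2/(nt)}$.
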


\begin{proof}
By Lemma \ref{kinglem} and Markov's Inequality,
\begin{align*}
P\bigg( K_n(t) < \frac{2(1 - \eps)}{t} \bigg) &= P \bigg( K_n(t) < \frac{2}{t + 2/n} \cdot (1 - \eps) \Big(1 + \frac{2}{nt} \Big) \bigg) \\
&\leq P \bigg( \bigg| K_n(t) - \frac{2}{t + 2/n} \bigg| > \frac{2(1 - (1 - \eps) (1 + 2/(nt)))}{t + 2/n} \bigg) \\
&\leq \frac{\delta}{2(1 - (1 - \eps)(1 + 2/(nt)))} \\
&\leq \frac{\delta}{\eps - 2/(nt)},
\end{align*}
as claimed.
\end{proof}

\subsection{Proof of Proposition \ref{newprop}}\label{newpropproof}
\begin{proof}[Proof of Proposition \ref{newprop}]
We obtain upper and lower bounds on $N(t_j)$ by comparing our process to simpler coalescent processes.  For the upper bound, let $N^+(t)$ denote the number of individuals remaining at time $t$ in a model that is the same as our model, except that all species mergers are suppressed.  Suppressing species mergers can only reduce the number of mergers of individual lineages, so $N^+(t)$ stochastically dominates $N(t)$ for all $t$.  Let $N_i^+(t)$ be the number of individual lineages at time $t$ belonging to species $i$, in this new model.  Let $\varepsilon > 0$.  Then, using Markov's Inequality,
\begin{equation}\label{neweq1}
P \bigg( N^+(t_j) > \frac{(2 + \varepsilon)s_j}{t_j} \bigg) \leq \frac{t_j}{\varepsilon s_j} E \bigg[ \bigg| N^+(t_j) - \frac{2s_j}{t_j} \bigg| \bigg] \leq \frac{t_j}{\eps s_j} \sum_{i=1}^{s_j} E \bigg[ \bigg| N_i^+(t_j) - \frac{2}{t_j} \bigg| \bigg].
\end{equation}
Using Lemma \ref{kinglem} and the assumption that $1/n_j \ll t_j$, we get that for all $i \in \{1, \dots, s_j\}$ and all $\delta > 0$, 
\begin{equation}\label{neweq2}
\limsup_{j \rightarrow \infty} \: t_j E \bigg[ \bigg| N_i^+(t_j) - \frac{2}{t_j} \bigg| \bigg] \leq \limsup_{j \rightarrow \infty} \: t_j \bigg( \bigg| \frac{2}{t_j} - \frac{2}{t_j + 2/n_j} \bigg| + \frac{\delta}{t_j + 2/n_j} \bigg) = \delta.
\end{equation}
Combining (\ref{neweq1}) and (\ref{neweq2}) yields
\begin{equation}\label{newupper}
\lim_{j \rightarrow \infty} P \bigg( N^+(t_j) > \frac{(2 + \varepsilon)s_j}{t_j} \bigg) = 0.
\end{equation}

For the lower bound, recall that at time zero, blocks of the partition are labeled by the integers $1, \dots, s_j$, corresponding to the $s_j$ species.  When the two species corresponding to the labels $i$ and $j$ merge, where $i < j$, individuals of both species take the label $i$.  Let $N^-(t_j)$ denote the number of individual lineages at time $t_j$ whose species label has not changed between times $0$ and $t_j$.  That is, we keep only the individuals from one of the original species corresponding to each of the $S(t_j)$ species at time $t_j$.  Clearly $N^-(t_j) \leq N(t_j)$.  Conditional on $S(t_j) = s$, the distribution of $N^-(t_j)$ is the same as the distribution of what we get by running $s$ independent copies of Kingman's coalescent, each started with $n_j$ lineages, and counting the total number of lineages remaining at time $t_j$.  Therefore, the same reasoning that leads to (\ref{newupper}) gives
\begin{equation}\label{neweq3}
\lim_{j \rightarrow \infty} P \bigg( N^-(t_j) < \frac{(2 - \varepsilon)s}{t_j} \bigg| S(t_j) = s\bigg) = 0,
\end{equation}
and the convergence is uniform in $s$.  However, because $t_j \ll 1/s_j$, another application of Lemma~\ref{kinglem} yields $$\lim_{j \rightarrow \infty} P(S(t_j) \leq (1 - \eps) s_j) = 0.$$  Combining this result with (\ref{neweq3}) yields
\begin{equation}\label{newlower}
\lim_{j \rightarrow \infty} P \bigg( N^-(t_j) < \frac{(2 - \varepsilon)(1 - \varepsilon)s_j}{t_j} \bigg) = 0.
\end{equation}
The proposition follows from (\ref{newupper}) and (\ref{newlower}).
\end{proof}

\subsection{Kingman's coalescent and time-changed Yule trees}\label{KingmanYule}

We now define the coalescent process that describes the species mergers.  Let $(\Psi_{\infty}(t), t \geq 0)$ be a coalescent process having the same law as $(\Pi_{\infty}(ct), t \geq 0)$.  That is, $(\Psi_{\infty}(t), t \geq 0)$ has the same law as Kingman's coalescent, except that pairs of blocks merge at rate $c$ rather than at rate $1$.  For $s \in \N$, let $(\Psi_s(t), t \geq 0)$ denote the restriction of $(\Psi_{\infty}(t), t \geq 0)$ to $\{1, \dots, s\}$.  Let $S_{\infty}(t)$ be the number of blocks in the partition $\Psi_{\infty}(t)$, and let $S_s(t)$ denote the number of blocks in the partition $\Psi_s(t)$.  We interpret $S_{\infty}(t)$ as the number of species remaining at time $t$ when we start with infinitely many species at time zero, and $S_s(t)$ as the number of species remaining at time $t$ when we start with $s$ species at time zero.  Note that the coalescent process $(\Psi_{\infty}(t), t \geq 0)$ can also be depicted as a tree ${\cal T}$ with infinitely many leaves at height zero and $S_{\infty}(t)$ branches at height $t$.  The leaves can be labeled by the positive integers.

For positive integers $m$, let $\tau_m = \inf\{t: S_{\infty}(t) = m\}$.  If we consider the portion of the tree ${\cal T}$ below height $\tau_{m-1}$, we have $m$ subtrees, which we place in random order and denote by ${\cal T}^{1,m}, \dots, {\cal T}^{m,m}$.  One of these trees is pictured in Figure 4 below.

\begin{center}
\setlength{\unitlength}{0.9cm}
\begin{picture}(13,8.5)
\linethickness{0.3mm}
\put(1,1){\line(1,1){5.5}}
\put(6.5,6.5){\line(1,-1){5.5}}
\put(6.5,6.5){\line(0,1){1.5}}
\put(4,4){\line(1,-1){3}}
\put(10,3){\line(-1,-1){2}}
\put(2.5,2.5){\line(1,-1){1.5}}
\put(1.7,1.7){\line(1,-1){0.7}}
\put(6,2){\line(-1,-1){1}}
\put(11,2){\line(-1,-1){1}}
\put(8.7, 1.7){\line(1,-1){0.7}}
\put(3.6,1.4){\line(-1,-1){0.4}}
\put(5.5,1.5){\line(1,-1){0.5}}
\put(11.6,1.4){\line(-1,-1){0.4}}
\put(6.5,6.5){\circle*{0.1}}
\put(4,4){\circle*{0.1}}
\put(10,3){\circle*{0.1}}
\put(2.5,2.5){\circle*{0.1}}
\put(1.7,1.7){\circle*{0.1}}
\put(6,2){\circle*{0.1}}
\put(11,2){\circle*{0.1}}
\put(8.7,1.7){\circle*{0.1}}
\put(11.6,1.4){\circle*{0.1}}
\put(3.6,1.4){\circle*{0.1}}
\put(5.5,1.5){\circle*{0.1}}
\put(6.5,8){\circle*{0.1}}
\put(6.7,8){$\tau_{m-1}$}
\put(6.7,6.5){$V^{k,m}$}
\put(3,4){$V_1^{k,m}$}
\put(10.1,3){$V_2^{k,m}$}
\put(1.5,2.5){$V_{11}^{k,m}$}
\put(6.1,2){$V_{12}^{k,m}$}
\put(11.1,2){$V_{22}^{k,m}$}
\put(7.7,1.7){$V_{21}^{k,m}$}
\put(3.7,1.4){$V_{112}^{k,m}$}
\put(0.7,1.7){$V_{111}^{k,m}$}
\end{picture}

\vspace{-.2in}
Figure 4: The tree ${\cal T}^{k,m}$.
\end{center}

For $k \in \{1, \dots, m\}$, $\ell \in \{0, 1, 2, \dots\}$, and $i_1, \dots, i_\ell \in \{1, 2\}$, we will define random variables $U^{k,m}_{i_1 \dots i_{\ell}}$ and $V^{k,m}_{i_1 \dots i_{\ell}}$ as follows.
We begin at time $\tau_{m-1}$ and follow the tree ${\cal T}^{k,m}$ in reversed time from time $\tau_{m-1}$ down to time $0$, so that branches split instead of coalescing.  Define $V^{k,m}$ to be the time when the initial branch splits into two.  Then define $V^{k,m}_1$ and $V^{k,m}_2$ to be the times when the two branches created at time $V^{k,m}$, ordered at random, split again.  Given $V^{k,m}_{i_1 \dots i_l}$, let $V^{k,m}_{i_1 \dots i_l1}$ and $V^{k,m}_{i_1 \dots i_l 2}$ denote the times when the two branches created at time $V^{k,m}_{i_1 \dots i_l}$ split into two.  Let $U^{k,m} = V^{k,m}/\tau_{m-1}$, and for $\ell \geq 1$, define $U^{k,m}_{i_1 \dots i_{\ell}} = V^{k,m}_{i_1 \dots i_{\ell}}/V^{k,m}_{i_1 \dots i_{\ell-1}}$.  Then
\begin{equation}\label{Vdef}
V^{k,m}_{i_1 \dots i_{\ell}} = U^{k,m} U^{k,m}_{i_1} U^{k,m}_{i_1 i_2} \dots U^{k,m}_{i_1 \dots i_{\ell}} \tau_{m-1}.
\end{equation}

A key ingredient in our proof is that the random variables $U^{k,m}_{i_1 \dots i_{\ell}}$ are approximately independent, and have approximately a uniform distribution on $[0, 1]$.  Making this statement rigorous involves coupling the coalescent with a time-changed Yule process.  This connection between Kingman's coalescent and a Yule process was discussed in \cite{berestycki09}, in which both Kingman's coalescent and a Yule process are shown to be embedded in a Brownian excursion.

Consider a Yule process $(Y(t), t \geq 0)$, which is a continuous-time branching process in which there are no deaths and each individual independently gives birth at rate $1$.  Consider the time-change which maps $t$ to $u = 1 - e^{-t}$, so that $t = -\log(1 - u)$.  It is well-known that for all $u \in [0, 1)$, the next time that an individual at time $u$ gives birth is uniformly distributed on $[u, 1]$.  To see this, note that the probability that an individual at time $u$ gives birth before time $u + x(1 - u)$ is the same as the probability that an individual in the original Yule process at time $-\log(1 - u)$ gives birth before time $-\log(1 - u - x(1-u))$, which is $$1 - e^{\log(1 - u - x(1-u)) - \log(1 - u)} = 1 - \frac{1 - u - x(1-u)}{1 - u} = x.$$  We can then do the time-reversal $v = 1 - u = e^{-t}$, so $t = -\log v$.  After this additional time change, we start at time $1$, and individuals branch as we go backwards in time.  An individual at time $v$ will branch next at a time which is uniformly distributed on $[0, v]$, and individuals reproduce independently.

Fix a positive integer $m$.  We now obtain a Yule process started with $m$ individuals by starting with Kingman's coalescent and then performing a random time change.  

\begin{Lemma}\label{Yulelem}
For $0 < t \leq \tau_{m-1}$, let $$f_m(t) = \log \bigg(\frac{mc}{2} \bigg) + \int_t^{\tau_{m-1}} \frac{c (S_{\infty}(r) - 1)}{2} \: dr.$$  Then $(S_{\infty}(f_m^{-1}(u)), u > \log (mc/2))$ is a Yule process started with $m$ individuals at time $\log (mc/2)$.
\end{Lemma}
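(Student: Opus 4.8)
The plan is to show that the time change $f_m$ turns the block-counting process of $(\Psi_\infty(t))$, read backwards in time, into a pure birth process whose jump chain and holding times are exactly those of a Yule process. First I would record that $f_m$ is well defined and well behaved: for $t > 0$ the integrand $c(S_\infty(r)-1)/2$ is bounded on $[t,\tau_{m-1}]$, since there $S_\infty$ is a decreasing step function taking the finitely many values $m-1, m, \dots, S_\infty(t)$, and on $(0,\tau_{m-1})$ we have $S_\infty(t) \ge m \ge 2$, so $f_m'(t) = -c(S_\infty(t)-1)/2 < 0$. Hence $f_m$ is continuous and strictly decreasing with $f_m(\tau_{m-1}) = \log(mc/2)$, and (as I discuss below) it maps $(0,\tau_{m-1}]$ onto $[\log(mc/2),\infty)$. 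Consequently $g(u) := S_\infty(f_m^{-1}(u))$ is well defined for $u \ge \log(mc/2)$; increasing $u$ corresponds to decreasing $t$, i.e.\ to running the coalescent backwards, so that branches split and $g$ increases from the value $m$.

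The heart of the argument is the identification of the holding times. The block-counting process $(S_\infty(t))$ is, by the Markov property and the binary-merger structure of Kingman's coalescent, a pure death process whose successive holding times $\tau_{k-1}-\tau_k$ are independent with $\tau_{k-1}-\tau_k =_{\mathrm{d}} \mathrm{Exp}\big(\binom{k}{2}c\big)$, and in which $S_\infty = k$ exactly on $[\tau_k,\tau_{k-1})$ for each $k \ge m$. Because $f_m' = -c(S_\infty-1)/2$, the time change stretches the $t$-interval on which $S_\infty = k$ by the constant factor $c(k-1)/2$, so the amount of $u$-time that $g$ spends at level $k$ is
$$H_k = \int_{\tau_k}^{\tau_{k-1}} \frac{c(S_\infty(r)-1)}{2}\,dr = \frac{c(k-1)}{2}\,(\tau_{k-1}-\tau_k).$$
Since scaling an $\mathrm{Exp}(\lambda)$ variable by $a$ yields an $\mathrm{Exp}(\lambda/a)$ variable, I get $H_k =_{\mathrm{d}} \mathrm{Exp}(k)$, and the $H_k$ are independent because the $\tau_{k-1}-\tau_k$ are. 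Thus $g$ starts at $m$, its jump chain climbs deterministically $m \to m+1 \to m+2 \to \cdots$, and it waits an independent $\mathrm{Exp}(k)$ amount of $u$-time at each level $k$. These are precisely the transition rule and holding-time law of a Yule process with $m$ initial individuals (total birth rate $k$ when $k$ individuals are present), started at $u$-time $\log(mc/2)$, which is the assertion.

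The step I expect to require the most care is handling the accumulation of mergers as $t \to 0$, that is, confirming that $f_m$ is genuinely onto $[\log(mc/2),\infty)$ and that stringing together infinitely many levels really produces a non-exploding Yule process rather than one reaching $\infty$ at a finite value of $u$. For this I would use that $(\Psi_\infty(t))$ comes down from infinity with $S_\infty(r) \sim 2/(cr)$ as $r\to 0$ --- this follows since $S_\infty$ has the same law as $t \mapsto K_\infty(ct)$ together with \eqref{cdi} --- so that $\int_0^{\tau_{m-1}} c(S_\infty(r)-1)/2\,dr$ diverges and $f_m(t) \to \infty$ as $t \to 0^+$. On the Yule side this matches the fact that $\sum_{k\ge m} 1/k = \infty$, so the process assembled from the independent $\mathrm{Exp}(k)$ holding times is defined for all $u > \log(mc/2)$ without exploding. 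With these points in place, the jump-chain and holding-time description above determines the law of $g$ completely and identifies it as the claimed Yule process.
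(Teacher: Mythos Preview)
Your proof is correct and follows essentially the same approach as the paper's: both identify the holding time at level $k$ as $\frac{c(k-1)}{2}(\tau_{k-1}-\tau_k)$ and observe that scaling the $\mathrm{Exp}\big(ck(k-1)/2\big)$ variable by $c(k-1)/2$ yields an independent $\mathrm{Exp}(k)$ variable, matching the Yule process. Your version is simply more thorough, explicitly verifying surjectivity of $f_m$ via divergence of $\int_0^{\tau_{m-1}} c(S_\infty(r)-1)/2\,dr$ and non-explosion of the resulting Yule process---details the paper leaves implicit.
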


\begin{proof}
First, note that $f_m(t)$ is a strictly decreasing function of $t$, so the inverse function is well-defined. 
Now let $k \geq m$, and note that $\tau_{k-1} - \tau_k$, which is the amount of time for which there are $k$ species, has an exponential distribution with rate $ck(k-1)/2$.  Because the time change stretches time by a factor of $c(k-1)/2$ during this interval, the distribution of $f_m^{-1}(\tau_k) - f_m^{-1}(\tau_{k-1})$ is exponential with rate $k$, matching the distribution of the amount of time for which there are $k$ individuals in a Yule process.
\end{proof}

We can now make the further time change discussed in the paragraph before Lemma \ref{Yulelem}, and define $S^*(v) = S_{\infty}(f_m^{-1}(-\log v))$ for $0 < v < 2/cm$.  Just as there is a coalescent tree ${\cal T}$, with subtrees ${\cal T}^{1,m}, \dots, {\cal T}^{m,m}$, associated with the original coalescent process $(S_{\infty}(t), t \geq 0)$, there are $m$ subtrees ${\cal T}^{1,m,*}, \dots, {\cal T}^{m,m,*}$ associated with the process $(S^*(v), 0 < v < 2/cm)$, and we can use these trees to define associated random variables $U^{k,m,*}_{i_1 \dots i_l}$ and $V^{k,m,*}_{i_1 \dots i_l}$ as before.  Furthermore, because $(S^*(v), 0 < v < 2/cm)$ arises by time-changing a Yule process, it follows from the discussion above that the new random variables $U^{k,m,*}_{i_1, \dots, i_l}$ are independent, and each has exactly the uniform distribution on $[0, 1]$.

For $0 < t < \tau_{m-1}$, we have $S_{\infty}(t) = S^*(e^{-f_m(t)})$.  Lemmas \ref{timechange} and \ref{timechange2} below establish that this time change is only a small perturbation of time.

\begin{Lemma}\label{timechange}
We have $$\sup_{0 < t \leq \tau_{m-1}} \bigg| \frac{t}{e^{-f_m(t)}} - 1 \bigg| \rightarrow_{\mathrm{p}} 0,$$ where $\rightarrow_{\mathrm{p}}$ denotes convergence in probability as $m \rightarrow \infty$.
\end{Lemma}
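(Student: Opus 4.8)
The plan is to pass to logarithms: since $|e^x-1|\le e^{|x|}-1$, it suffices to prove $\sup_{0<t\le\tau_{m-1}}|\log t+f_m(t)|\to_{\mathrm p}0$, as this forces uniform smallness of $|t/e^{-f_m(t)}-1|$. Inserting $\int_t^{\tau_{m-1}}\frac{dr}{r}=\log\tau_{m-1}-\log t$ into the definition of $f_m$ gives the exact identity
\[
\log t + f_m(t) = \log\!\Big(\tfrac{mc\,\tau_{m-1}}{2}\Big) - \tfrac{c}{2}(\tau_{m-1}-t) + C_m(t),\qquad C_m(t):=\int_t^{\tau_{m-1}}\Big(\tfrac{c S_\infty(r)}{2}-\tfrac1r\Big)\,dr,
\]
so I would treat the three pieces separately. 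Recall from the proof of Lemma~\ref{Yulelem} that the holding time $\tau_{k-1}-\tau_k$ is exponential with rate $ck(k-1)/2$, whence $E[\tau_j]=2/(cj)$ and $\mathrm{Var}(\tau_j)=\sum_{i>j}4/(c^2i^2(i-1)^2)=O(j^{-3})$. Chebyshev then gives $\tfrac{mc\tau_{m-1}}{2}\to_{\mathrm p}1$, so the first term vanishes, and $\tau_{m-1}\to_{\mathrm p}0$ makes the middle term uniformly negligible.

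The core is to show $\sup_t|C_m(t)|\to_{\mathrm p}0$. Since $S_\infty$ equals $k$ on $(\tau_k,\tau_{k-1})$, I would evaluate $C_m$ at the jump times and control the oscillation within each level separately. Summation by parts turns the piecewise-constant integral into the closed form
\[
C_m(\tau_K) = \tfrac c2\Big(m\tau_{m-1}-K\tau_K+\textstyle\sum_{j=m}^{K-1}\tau_j\Big) - \log\tfrac{\tau_{m-1}}{\tau_K},
\]
which is the crucial step, as the telescoping of the $\log$ terms exposes the cancellation that a naive estimate would miss. Substituting $\tau_j=2/(cj)+Z_j$, the deterministic part cancels the main term $\log\frac{K}{m-1}$ coming from $\log(\tau_{m-1}/\tau_K)$, leaving $O(1/m)$ uniformly in $K$ by Euler--Maclaurin applied to $\sum_{j=m}^{K-1}1/j$. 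For the random part, the boundary contributions in $Z_{m-1}$ and $Z_K$ (together with the $\log(1+\tfrac{cj}2 Z_j)$ terms) are controlled using that $\tfrac{cK\tau_K}{2}\to1$ almost surely --- this is \eqref{cdi} for $S_\infty$ (whose law is that of $K_\infty(c\,\cdot)$, so $tS_\infty(t)\to 2/c$) evaluated along $t=\tau_K$ --- which yields $\sup_{K\ge m}|\tfrac{cK\tau_K}{2}-1|\to0$ a.s.; while the sum $\tfrac c2\sum_{j=m}^{K-1}Z_j$ is dominated uniformly in $K$ by $\tfrac c2\sum_{j\ge m}|Z_j|$, whose expectation is $\sum_{j\ge m}O(j^{-3/2})=O(m^{-1/2})\to0$.

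To finish, the oscillation of $C_m$ within a level $k$ is at most $\tfrac{ck}{2}(\tau_{k-1}-\tau_k)+\log(\tau_{k-1}/\tau_k)$, and I would show $\sup_{k\ge m}\tfrac{ck}{2}(\tau_{k-1}-\tau_k)\to_{\mathrm p}0$ (the variable is $\mathrm{Exp}(k-1)$, and a union bound gives a maximum of order $\log m/m$) and hence $\sup_{k\ge m}(\tau_{k-1}-\tau_k)/\tau_k\to_{\mathrm p}0$, since $\tfrac{ck}{2}\tau_k$ is uniformly bounded below. I expect the main obstacle to be exactly this uniformity as $t\downarrow0$: because $1/r$ is not integrable at the origin, the pointwise estimate $rS_\infty(r)\approx 2/c$ cannot be integrated directly (it would produce a divergent $\log(1/t)$), so one genuinely needs the cancellation supplied by the summation by parts above, combined with the fast $O(j^{-3})$ decay of $\mathrm{Var}(\tau_j)$ that makes $\sum_j|Z_j|$ integrable; the rest is bookkeeping to keep every bound uniform in the upper index $K$.
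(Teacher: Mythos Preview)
Your approach is correct and arrives at the same conclusion, but the route differs from the paper's in a meaningful way. Both proofs reduce to showing $\sup_{0<t\le\tau_{m-1}}|\log t+f_m(t)|\to_{\mathrm p}0$ and both break the range of $t$ into the levels $[\tau_k,\tau_{k-1})$. The paper, however, works directly with $H_k:=f_m(\tau_k)-\log(mc/2)=\tfrac{c}{2}\sum_{j=m}^k(j-1)(\tau_{j-1}-\tau_j)$ and observes that its increments $H_k-H_{k-1}$ are \emph{independent} with mean $1/k$ and variance $1/k^2$; Kolmogorov's maximal inequality then gives $\sup_{k\ge m}|H_k-\sum_{j=m}^k 1/j|\to_{\mathrm p}0$ in one stroke, and this is combined with the almost sure estimate $|\log\tau_k-\log(2/ck)|\to0$ coming from \eqref{cdi} to finish. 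By contrast, you perform a summation by parts to express $C_m(\tau_K)$ in terms of the $\tau_j$ themselves, then substitute $\tau_j=2/(cj)+Z_j$ and control the random remainder through the absolute summability $\sum_{j\ge m}E|Z_j|=O(m^{-1/2})$ together with the uniform limit $\tfrac{cK\tau_K}{2}\to1$ for the boundary pieces, and finally treat the within-level oscillation separately. What the paper's argument buys is economy: the independence of the holding times is exploited through a single maximal inequality, avoiding the decomposition into boundary terms, running sum, and oscillation. What your argument buys is a slightly more ``deterministic'' flavor --- once you know the $\tau_j$ are individually close to $2/(cj)$ with summable errors, the rest is calculus --- which could transfer to settings where the increments of the integrated rate are not independent but the jump times are still well concentrated.
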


\begin{proof}
Taking logarithms, it suffices to show that as $m \rightarrow \infty$,
\begin{equation}\label{fmabs}
\sup_{0 < t \leq \tau_{m-1}} |\log t + f_m(t)| \rightarrow_{\mathrm{p}} 0.
\end{equation}
From (\ref{cdi}), we have $t S_{\infty}(t) \rightarrow 2/c$ almost surely as $t \rightarrow 0$.  It follows that $\tau_k \sim 2/ck$ almost surely, where $\sim$ means that the ratio of the two sides tends to one as $k \rightarrow \infty$.  Taking logarithms,
\begin{equation}\label{taukest}
\lim_{k \rightarrow \infty} \bigg| \log \tau_k - \log \bigg( \frac{2}{ck} \bigg) \bigg| = 0 \hspace{.2in} \textup{a.s.}
\end{equation}
For $k \geq m-1$, let $$H_k = \int_{\tau_k}^{\tau_{m-1}} \frac{c(S_{\infty}(r) - 1)}{2} \: dr = \frac{c}{2} \sum_{j=m}^k (\tau_{j-1} - \tau_j) (j-1).$$  Because $\tau_{j-1} - \tau_j$ has an exponential distribution with rate parameter $cj(j-1)/2$, and these random variables are independent for different values of $j$, we have $$E[H_k] = \sum_{j=m}^k \frac{1}{j}$$ and $$\Var(H_k) = \sum_{j=m}^k \frac{1}{j^2}.$$  By Kolmogorov's Maximal Inequality applied to the independent mean zero random variables $H_k - H_{k-1} - 1/k$, we have for all $\eps > 0$,
\begin{equation}\label{Hkprob}
P \bigg( \sup_{k \geq m} \bigg|H_k - \sum_{j=m}^k \frac{1}{j} \bigg| > \eps \bigg) \leq \frac{1}{\eps^2} \sum_{j=m}^{\infty} \frac{1}{j^2} \leq \frac{1}{(m-1) \eps^2}.
\end{equation}
Now suppose $\tau_k \leq t < \tau_{k-1}$ for some $k > m$.  Then
\begin{align}\label{logtupper}
\log t + f_m(t) &\leq \log \tau_{k-1} + \log \bigg( \frac{mc}{2} \bigg) + H_k \nonumber \\
&\leq \log \bigg( \frac{2}{c(k-1)} \bigg) + \bigg| \log \tau_{k-1} - \log \bigg( \frac{2}{c(k-1)} \bigg) \bigg| \nonumber \\
&\hspace{1in}+ \log \bigg( \frac{mc}{2} \bigg) + \sum_{j=m}^k \frac{1}{j} + \bigg| H_k - \sum_{j=m}^k \frac{1}{j} \bigg| \nonumber \\
&= \sum_{j=m}^k \frac{1}{j} - \log \bigg( \frac{k-1}{m} \bigg) + \bigg| \log \tau_{k-1} - \log \bigg( \frac{2}{c(k-1)} \bigg) \bigg| + \bigg| H_k - \sum_{j=m}^k \frac{1}{j} \bigg|,
\end{align}
and likewise
\begin{align}\label{logtlower}
\log t + f_m(t) &\geq \log \tau_k + \log \bigg( \frac{mc}{2} \bigg) + H_{k-1} \nonumber \\
&\geq \sum_{j=m}^{k-1} \frac{1}{j} - \log \bigg( \frac{k}{m} \bigg) - \bigg| \log \tau_k - \log \bigg( \frac{2}{ck} \bigg) \bigg| - \bigg| H_{k-1} - \sum_{j=m}^{k-1} \frac{1}{j} \bigg|.
\end{align}
From (\ref{logtupper}) and (\ref{logtlower}), combined with the bounds (\ref{taukest}) and (\ref{Hkprob}) and standard estimates for the harmonic series, we obtain (\ref{fmabs}).
\end{proof}

\begin{Lemma}\label{timechange2}
We have $$\sup_{0 < u < v \leq \tau_{m-1}} \bigg| \frac{e^{-f_m(u)} - e^{-f_m(v)}}{u - v} - 1 \bigg| \rightarrow_{\mathrm{p}} 0,$$
where $\rightarrow_{\mathrm{p}}$ denotes convergence in probability as $m \rightarrow \infty$.
\end{Lemma}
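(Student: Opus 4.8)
The plan is to collapse the two-parameter supremum over pairs $0 < u < v \le \tau_{m-1}$ into a one-parameter supremum of a derivative, and then to control that derivative uniformly using Lemma \ref{timechange} together with the asymptotics $\tau_k \sim 2/(ck)$. Write $g_m(t) = e^{-f_m(t)}$. Since $S_\infty$ is piecewise constant, $f_m$ is continuous and piecewise linear, so $g_m$ is locally Lipschitz on $(0,\tau_{m-1}]$ and
$$g_m'(t) = -f_m'(t)\, e^{-f_m(t)} = \frac{c(S_\infty(t) - 1)}{2}\, e^{-f_m(t)}$$
at all but finitely many points. By the fundamental theorem of calculus, for any $u < v$ the quantity $\frac{g_m(u) - g_m(v)}{u - v} = \frac{1}{v - u}\int_u^v g_m'(r)\,dr$ is an average of $g_m'$ over $[u,v]$, hence lies between the infimum and supremum of $g_m'$ there, and therefore
$$\sup_{0 < u < v \le \tau_{m-1}} \left|\frac{e^{-f_m(u)} - e^{-f_m(v)}}{u-v} - 1\right| \le \sup_{0 < t \le \tau_{m-1}} |g_m'(t) - 1|.$$
It thus suffices to show that the right-hand side tends to $0$ in probability.

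For the derivative I would split
$$g_m'(t) - 1 = \big(A_m(t)\, B_m(t) - 1\big) - \frac{c}{2}\, e^{-f_m(t)}, \qquad A_m(t) := \frac{c\, t\, S_\infty(t)}{2}, \quad B_m(t) := \frac{e^{-f_m(t)}}{t}.$$
The last term is harmless: since $f_m$ is decreasing with $f_m(\tau_{m-1}) = \log(mc/2)$, we have $e^{-f_m(t)} \le 2/(mc)$ for all $t \le \tau_{m-1}$, whence $\frac{c}{2}\, e^{-f_m(t)} \le 1/m$ uniformly and deterministically. For the bracketed term, Lemma \ref{timechange} already provides $\sup_{0 < t \le \tau_{m-1}} |B_m(t) - 1| \rightarrow_{\mathrm{p}} 0$, so in view of $|A_m B_m - 1| \le |A_m|\,|B_m - 1| + |A_m - 1|$ it remains only to prove $\sup_{0 < t \le \tau_{m-1}} |A_m(t) - 1| \to 0$ almost surely, which simultaneously supplies the uniform boundedness of $A_m$ needed in that product estimate.

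To control $A_m$, note that $S_\infty(t) = k$ on $[\tau_k, \tau_{k-1})$, so there $A_m(t) = ckt/2$ lies between $\frac{ck\tau_k}{2}$ and $\frac{ck\tau_{k-1}}{2}$. As $t$ ranges over $(0, \tau_{m-1}]$ the relevant values of $S_\infty(t)$ are all at least $m-1$, so it suffices to bound these two quantities uniformly over large $k$. The almost sure asymptotics $\tau_k \sim 2/(ck)$ recorded in the proof of Lemma \ref{timechange} give $\frac{ck\tau_k}{2} \to 1$ and $\frac{ck\tau_{k-1}}{2} = \frac{c(k-1)\tau_{k-1}}{2}\cdot\frac{k}{k-1} \to 1$ as $k \to \infty$; being tails of almost surely convergent sequences, $\sup_{k \ge m}\big|\frac{ck\tau_k}{2} - 1\big|$ and $\sup_{k \ge m}\big|\frac{ck\tau_{k-1}}{2} - 1\big|$ vanish almost surely as $m \to \infty$, yielding $\sup_{0 < t \le \tau_{m-1}}|A_m(t) - 1| \to 0$ a.s. Combining the three estimates gives $\sup_{0 < t \le \tau_{m-1}}|g_m'(t) - 1| \rightarrow_{\mathrm{p}} 0$, and the lemma follows.

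The main obstacle is conceptual rather than computational: recognizing that the delicate uniform \emph{bi-Lipschitz} statement collapses, via absolute continuity and the averaging identity, to a single uniform bound on $g_m'$. Once this reduction is in place the two inputs — the pointwise time-change estimate of Lemma \ref{timechange} and a uniform-over-the-tail version of $\tau_k \sim 2/(ck)$ — combine routinely. The one point requiring care is that the supremum runs all the way down to $t \to 0$, where $S_\infty(t) \to \infty$; this is precisely why the control of $A_m$ must be phrased as a statement about the tail $k \ge m$ rather than over a fixed window, but no genuine difficulty arises since every relevant index diverges as $m \to \infty$.
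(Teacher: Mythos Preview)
Your proof is correct and follows essentially the same approach as the paper: reduce the two-parameter supremum to a uniform bound on $g_m'(t)-1$, factor $g_m'(t)$ as the product of $e^{-f_m(t)}/t$ with $\tfrac{c}{2}t(S_\infty(t)-1)$, and control the two factors respectively by Lemma~\ref{timechange} and by the almost-sure asymptotics coming from (\ref{cdi}). The only cosmetic difference is that you peel off the $-\tfrac{c}{2}e^{-f_m(t)}$ term and bound it by $1/m$ directly, whereas the paper keeps $S_\infty(t)-1$ together and simply observes that $ct(S_\infty(t)-1)/2 \to 1$ as $t\to 0$ combined with $\tau_m \to 0$ suffices for the uniform statement; your more explicit tail argument via $ck\tau_k/2$ and $ck\tau_{k-1}/2$ achieves the same thing.
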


\begin{proof}
Let $g(t) = e^{-f_m(t)}$.
We have $$\sup_{0 < u < v \leq \tau_{m-1}} \bigg| \frac{e^{-f_m(u)} - e^{-f_m(v)}}{u - v} - 1 \bigg| \leq \sup_{0 < t < \tau_m} |g'(t) - 1|.$$
Also, $$g'(t) = - e^{-f_m(t)} f_m'(t) = \frac{e^{-f_m(t)}}{t} \cdot \frac{c t (S_{\infty}(t) - 1)}{2}.$$ Equation (\ref{cdi}) implies that $ct(S_{\infty}(t) - 1)/2 \rightarrow 1$ almost surely as $t \rightarrow 0$, and also that $\tau_m \rightarrow 0$ almost surely as $m \rightarrow \infty$.  Combining these results with Lemma \ref{timechange}, we see that as $m \rightarrow \infty$, $$\sup_{0 < t < \tau_m} |g'(t) - 1| \rightarrow_{\mathrm{p}} 0,$$ which implies the result.
\end{proof}

\section{Results on the Nested Coalescent}\label{sec:nested}
\subsection{Convergence to a unique solution of the RDE}

Let ${\cal P}$ denote the set of probability distributions on $[2, \infty]$, and let ${\cal P}_1$ denote the set of probability distributions on $[2, \infty]$ with finite mean.  Let $T: {\cal P} \rightarrow {\cal P}$ be the mapping defined such that $T(\mu)$ is the distribution of
\begin{equation}\label{Tmu}
\frac{2}{1 - U(1 - \frac{2}{W_1 + W_2})},
\end{equation}
where $U$ has a uniform distribution on $[0,1]$, the random variables $W_1$ and $W_2$ have distribution $\mu$, and the random variables $U$, $W_1$, and $W_2$ are independent.  Let $T^n: {\cal P} \rightarrow {\cal P}$ be the map obtained by iterating $n$ times the map $T$.  Our goal in this subsection is to prove the following result.

\begin{Prop}\label{fixedpt}
The equation $T(\mu) = \mu$ has a unique solution $\mu^*$, and $\mu^* \in {\cal P}_1$.  For all $\mu \in {\cal P}$, the sequence $T^n(\mu)$ converges to $\mu^*$ in the sense of weak convergence of probability measures on $[2, \infty]$.  Also, the mean of $T^n(\mu)$ converges as $n \rightarrow \infty$ to the mean of $\mu^*$.
\end{Prop}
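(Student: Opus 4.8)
The plan is to exploit the monotonicity of $T$ with respect to the stochastic order and to identify the fixed point as the common value of two monotone sequences of iterates. First I would record the elementary facts that $T$ maps $\mathcal{P}$ into $\mathcal{P}$ (the random variable in \eqref{Tmu} lies in $[2,W_1+W_2]$, since $W_1,W_2\ge 2$ forces $1-2/(W_1+W_2)\in[1/2,1)$), that $T$ is continuous for weak convergence on the compact space $[2,\infty]$, and that $T$ is monotone: the map $(w_1,w_2)\mapsto 2/(1-u(1-2/(w_1+w_2)))$ is nondecreasing in each coordinate, so $\mu\preceq\nu$ implies $T(\mu)\preceq T(\nu)$, where $\preceq$ denotes stochastic domination. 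Since $T(\delta_2)\succeq\delta_2$ and $T(\delta_\infty)\preceq\delta_\infty$, the sequences $T^n(\delta_2)$ and $T^n(\delta_\infty)$ are respectively stochastically increasing and decreasing; by compactness they converge weakly to limits $\mu_L\preceq\mu_U$, which are fixed points by continuity. Because every $\mu\in\mathcal{P}$ satisfies $\delta_2\preceq\mu\preceq\delta_\infty$, applying $T^n$ squeezes every fixed point between $\mu_L$ and $\mu_U$. Existence is then immediate, and both uniqueness and the weak convergence claim reduce to proving $\mu_L=\mu_U$.

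The crux is to show the two limits coincide, which I would do by a contraction estimate realized through the recursive tree process of \cite{aldous05}. Couple $T^n(\delta_2)$ and $T^n(\delta_\infty)$ on a common binary tree of depth $n$, using the same uniform variable $U$ at each internal node and assigning $2$ (resp.\ $\infty$) at the leaves; writing $W_L^{(n)}\le W_U^{(n)}$ for the coupled root values, monotonicity gives the pointwise ordering. Fix an internal node with child-sums $S_L\le S_U$ (both finite a.s.\ once $n\ge 2$, since one application of $T$ makes values finite) and set $g(s)=2/(1-U(1-2/s))$. A direct computation gives $g'(s)=U\,g(s)^2/s^2$, and since $g(s)/s=2/(s(1-U)+2U)$ is maximized over $s\ge 4$ at $s=4$, where it equals $1/(2-U)$, one gets $g'(s)\le U/(2-U)^2$ for all $s\ge 4$. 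Integrating,
\[
W_U^{(n)}-W_L^{(n)}=g(S_U)-g(S_L)\le \frac{U}{(2-U)^2}\,(S_U-S_L),
\]
and, since $U$ is independent of the two subtrees and $\int_0^1 u/(2-u)^2\,du=1-\ln 2$, taking expectations gives $d_n\le 2(1-\ln 2)\,d_{n-1}$ for $n\ge 3$, where $d_n:=E[W_U^{(n)}-W_L^{(n)}]$. As $2(1-\ln 2)\approx 0.614<1$ and $d_2<\infty$ (see below), we conclude $d_n\to 0$ geometrically. I expect this contraction estimate — in particular locating the factor $g(s)^2/s^2$ and extracting a constant strictly below one — to be the main obstacle; the naive bound $g'(s)\le U$ only yields $d_n\le d_{n-1}$, which is not enough.

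To finish I would control first moments. Writing $m_n$ and $\ell_n$ for the means of $T^n(\delta_\infty)$ and $T^n(\delta_2)$, equation \eqref{EWcond} gives $m_n=E[\phi(S)]$ with $S$ the child-sum and $\phi(s)=\frac{2}{2/s-1}\log(2/s)$. A direct estimate shows $m_2<\infty$ (applying $T$ to $\delta_\infty$ yields $2/(1-U)$, whose logarithm is $\log 2$ plus a unit exponential, while $\phi$ grows only logarithmically), which supplies the finiteness $d_2\le m_2<\infty$ used above. Since $\phi$ is concave on $[4,\infty)$, Jensen gives $m_n\le\phi(2m_{n-1})$, and as $\phi(2a)$ is sublinear there is a finite $A$ with $\phi(2A)\le A$; induction then yields $\sup_{n\ge 2}m_n\le\max(m_2,A)<\infty$. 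The sequence $\ell_n$ is nondecreasing and $m_n$ nonincreasing with $m_n-\ell_n=d_n\to 0$, so both converge to a common finite limit $\gamma$; monotone convergence (for the increasing coupling of $T^n(\delta_2)$) and dominated convergence (for the decreasing, $m_2$-dominated coupling of $T^n(\delta_\infty)$) identify these limits as $E[\mu_L]=E[\mu_U]=\gamma$. Since $\mu_L\preceq\mu_U$ have equal finite means they must coincide — a stochastically ordered pair with equal means is equal — giving the unique fixed point $\mu^*:=\mu_L=\mu_U\in\mathcal{P}_1$. Finally, for arbitrary $\mu\in\mathcal{P}$ the squeeze $T^n(\delta_2)\preceq T^n(\mu)\preceq T^n(\delta_\infty)$ yields both the weak convergence $T^n(\mu)\Rightarrow\mu^*$ and, via $\ell_n\le E[T^n(\mu)]\le m_n$ with $\ell_n,m_n\to\gamma$, the convergence of the means of $T^n(\mu)$ to $\gamma$.
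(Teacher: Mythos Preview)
Your argument is correct and rests on the same computation as the paper: the derivative bound $\partial_x h(u,x)\le u/(2-u)^2$ for $x\ge 4$, whose expectation $1-\log 2$ yields the contraction factor $2(1-\log 2)<1$. The paper packages this as Lemma~\ref{contract}, showing $T$ is a strict contraction on $\mathcal{P}_1$ for the Kantorovich--Rubinstein (Wasserstein-$1$) metric, and then simply invokes Banach's fixed point theorem together with Lemma~\ref{finitemean} (that $T^2(\mathcal{P})\subset\mathcal{P}_1$) to obtain existence, uniqueness, and convergence of means in one stroke. You instead exploit the monotonicity of $T$ in the stochastic order, sandwich every $T^n(\mu)$ between the extremal iterates $T^n(\delta_2)$ and $T^n(\delta_\infty)$, and apply the Lipschitz bound only to this coupled pair; since for stochastically ordered laws the Wasserstein-$1$ distance equals the difference of means, your $d_n$ is exactly the paper's metric applied to the extremal pair, so the two arguments are the same at heart. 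Your route is slightly more hands-on and avoids naming the metric machinery, at the cost of a longer write-up and a few redundant steps (for instance, once $m_2<\infty$ and $T^n(\delta_\infty)\preceq T^2(\delta_\infty)$ are in hand, the Jensen argument bounding $m_n$ is unnecessary); the paper's formulation is more concise and also yields the stronger statement that $T$ contracts every pair in $\mathcal{P}_1$, not just the extremal one.
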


For $u \in (0, 1)$ and $x \in (0, \infty]$, define
\begin{equation}\label{hdef}
h(u,x) = \frac{2}{1 - u(1 - \frac{2}{x})} = \frac{2}{(1 - u) + \frac{2u}{x}}.
\end{equation}

\begin{Lemma}\label{finitemean}
We have $T({\cal P}_1) \subset {\cal P}_1$, and $T^2({\cal P}) \subset {\cal P}_1$.
\end{Lemma}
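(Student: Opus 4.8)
The plan is to reduce both inclusions to a single criterion phrased in terms of the \emph{logarithmic} moment $E_\mu[\log W]$ rather than the mean itself. The guiding observation is that $T$ is smoothing: it upgrades finiteness of $E[\log W]$ to finiteness of $E[W]$, and it produces a finite log-moment out of \emph{any} input distribution. Throughout I would work from the conditional expectation in (\ref{EWcond}): since $h(u,x)\ge 2>0$ by (\ref{hdef}), Tonelli's theorem lets me integrate over $U$ first, so that for $W\sim T(\mu)$ with $W_1,W_2\sim\mu$ independent one has $E[W]=E[\phi(W_1+W_2)]$ and $E[\log W]=E[G(W_1+W_2)]$, where $\phi(x)=\int_0^1 h(u,x)\,du$ and $G(x)=\int_0^1\log h(u,x)\,du$.

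For $T({\cal P}_1)\subset{\cal P}_1$, I would first record from (\ref{EWcond}) that $\phi(x)=\frac{2x}{x-2}\log(x/2)$. Since $\frac{2x}{x-2}$ is decreasing on $[4,\infty)$ with maximal value $4$ and $\log(x/2)\le\log x$, this gives $\phi(x)\le 4\log x$ for $x\ge 4$. As $W_1,W_2\ge 2$ forces $(W_1-1)(W_2-1)\ge1$, i.e.\ $W_1+W_2\le W_1 W_2$, one has $\log(W_1+W_2)\le\log W_1+\log W_2$, and combining these, $E_{T(\mu)}[W]\le 4\,E[\log(W_1+W_2)]\le 8\,E_\mu[\log W]$. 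Because $\log W\le W$ on $[2,\infty)$, every $\mu\in{\cal P}_1$ has $E_\mu[\log W]\le E_\mu[W]<\infty$, so the displayed bound yields $T(\mu)\in{\cal P}_1$.

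For $T^2({\cal P})\subset{\cal P}_1$, the point is that the intermediate criterion $E_\nu[\log W]<\infty$ holds for \emph{every} $\nu$ in the range of $T$, with a universal bound. I would compute $G$ explicitly: writing $a=1-2/x$, one has $\int_0^1\log(1-au)\,du=-\frac{(1-a)\log(1-a)}{a}-1$, so that $G(x)=\log 2+1+\frac{2}{x-2}\log(2/x)$. The correction term is $\le 0$ for all $x\in[2,\infty]$, including the boundary value $x=\infty$ (where $h(u,\infty)=2/(1-u)$ and $G(\infty)=\log 2+1$), giving $G(x)\le\log 2+1$ uniformly in $x$. Hence $E_{T(\mu)}[\log W]=E[G(W_1+W_2)]\le\log 2+1<\infty$ for every $\mu\in{\cal P}$, even when $\mu$ has infinite mean or places mass at $\infty$. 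Applying the bound of the previous paragraph with $\nu=T(\mu)$ in place of $\mu$ then gives $E_{T^2(\mu)}[W]\le 8\,E_{T(\mu)}[\log W]<\infty$, i.e.\ $T^2(\mu)\in{\cal P}_1$.

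I expect the only genuinely non-routine step to be the uniform control of $G(x)$ as $x\to\infty$. A priori $\phi(x)\to\infty$, so a single moment computation cannot rescue an input with a heavy tail, and this is precisely why the second statement requires two applications of $T$. The boundedness of $G$ reflects the fact that averaging $\log h(U,x)$ over the uniform split time $U$ tames the logarithmic blow-up, so that a single species-merger step already regularizes an arbitrarily heavy tail at the level of the log-moment. Verifying the closed form for $G$ and checking the $x=\infty$ endpoint are the steps I would treat most carefully; the two elementary inequalities in the second paragraph are routine.
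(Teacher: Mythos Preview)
Your argument is correct, and it takes a genuinely different route from the paper's proof. The paper handles $T({\cal P}_1)\subset{\cal P}_1$ by the crude split $h(U,x)\le \frac{2}{1-U}\,\1_{\{U\le 1/2\}}+\frac{x}{U}\,\1_{\{U>1/2\}}$, yielding $E_{T(\mu)}[W]\le 4+2E_\mu[W_1+W_2]$, a bound linear in the mean. For $T^2({\cal P})\subset{\cal P}_1$ the paper invokes monotonicity of $h$ in $x$ to reduce to the worst case $\mu=\delta_\infty$, computes $T(\delta_\infty)=_{\mathrm d}2/U$, and then shows directly via a tail bound that $T^2(\delta_\infty)$ has finite mean.

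Your approach is more unified: the single inequality $E_{T(\mu)}[W]\le 8\,E_\mu[\log W]$, obtained from the exact formula $\phi(x)=\frac{2x}{x-2}\log(x/2)\le 4\log x$, reduces both statements to controlling $E_\mu[\log W]$. The key new ingredient is the explicit evaluation $G(x)=\log 2+1+\frac{2}{x-2}\log(2/x)\le \log 2+1$, which shows that \emph{one} application of $T$ already produces a uniformly bounded log-moment regardless of the input. This gives a cleaner explanation of why exactly two iterations suffice: the first regularizes the log-moment, the second upgrades it to a finite mean. You also get a uniform numerical bound $E_{T^2(\mu)}[W]\le 8(1+\log 2)$ over all $\mu\in{\cal P}$. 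The paper's approach, by contrast, is more hands-on and avoids the closed-form evaluations of $\phi$ and $G$; its monotonicity reduction to $\delta_\infty$ is quicker to set up but less conceptual. One minor remark: your statement that the correction term is $\le 0$ ``for all $x\in[2,\infty]$'' should really be read on $(2,\infty]$ (with the limiting values at the endpoints), though this is harmless since $W_1+W_2\ge 4$.
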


\begin{proof}
Let $\mu \in {\cal P}_1$.  Let $U$, $W_1$, and $W_2$ be independent random variables such that $U$ has a uniform distribution on $[0,1]$, and $W_1$ and $W_2$ have distribution $\mu$.  Then $T(\mu)$ has the same distribution as $h(U, W_1 + W_2)$, and a stochastic upper bound can be obtained by removing one of the two terms from the denominator on the right-hand side of (\ref{hdef}).  Therefore,
$$E[h(U, W_1 + W_2)] \leq E \bigg[ \frac{2}{1 - U} \1_{\{U \leq 1/2\}} \bigg] + E \bigg[ \frac{W_1 + W_2}{U} \1_{\{U \geq 1/2\}} \bigg] \leq 4 + 2 E[W_1 + W_2].$$
It follows that $T(\mu) \in {\cal P}_1$, which proves the first statement of the lemma.

Let $\delta_a$ denote the unit mass at $a$.  Because the expression in (\ref{Tmu}) is an increasing function of $W_1$ and $W_2$, if we can show that $T^2(\delta_{\infty}) \in {\cal P}_1$, then it will follow that $T^2(\mu) \in {\cal P}_1$ for all $\mu \in {\cal P}$, which will establish the second part of the lemma.  Note that $T(\delta_{\infty})$ has the same distribution as $2/(1 - U)$, which has the same distribution as $2/U$.  Therefore, $T^2(\delta_{\infty})$ has the same distribution as 
$$Y = \frac{2}{1 - U\big(1 - \frac{2}{2/U_1 + 2/U_2}\big)} = \frac{2}{(1 - U) + \frac{U}{1/U_1 + 1/U_2}},$$ where $U$, $U_1$, and $U_2$ are independent random variables, each having the uniform distribution on $[0,1]$.  Thus, it suffices to show that $E[Y] < \infty$.  We have $$Y \leq \min\bigg\{ \frac{2}{1 - U}, \: \frac{2}{U} \bigg( \frac{1}{U_1} + \frac{1}{U_2} \bigg) \bigg\}.$$  Let $x \geq 4$.  If $Y \geq x$, then we must have $2/(1-U) \geq x$ and therefore $U > 1 - 2/x$.  We also must have $(2/U)(1/U_1 + 1/U_2) \geq x$.  When $U > 1 - 2/x \geq 1/2$, this can only happen if $1/U_1 + 1/U_2 \geq x/4$, which requires either $U_1 \leq 8/x$ or $U_2 \leq 8/x$.  Thus,
$$P(Y \geq x) \leq P(U_1 \geq 1 - 2/x \mbox{ and either }U_1 \leq 8/x \mbox{ or }U_2 \leq 8/x) \leq \frac{2}{x} \bigg(\frac{8}{x} + \frac{8}{x} \bigg) = \frac{32}{x^2}.$$  It follows that 
$$E[Y] = \int_0^{\infty} P(Y \geq x) \: dx \leq 4 + \int_4^{\infty} \frac{32}{x^2} \: dx < \infty,$$
which completes the proof.
\end{proof}

Let $d$ denote the Kantorovich-Rubinstein metric on ${\cal P}_1$, which goes back to \cite{kantorovich58} and is also the Wasserstein metric for $p=1$.  That is, $$d(\mu, \nu) = \inf\{E[|X - Y|]: X \mbox{ has distribution } \mu \mbox{ and }Y \mbox{ has distribution } \nu\}.$$  It is well-known that $d$ is a complete metric on ${\cal P}_1$ (see, for example, \cite{givens84}).  Because $2 (1 - \log 2) < 1$, the following lemma shows that, with respect to this metric, $T$ is a strict contraction.

\begin{Lemma}\label{contract}
Suppose $\mu, \nu \in {\cal P}_1$ and $\mu \neq \nu$.  Then $$d(T(\mu), T(\nu)) \leq 2(1 - \log 2) d(\mu, \nu).$$
\end{Lemma}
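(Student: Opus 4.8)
The plan is to bound $d(T(\mu),T(\nu))$ from above by constructing an explicit coupling of a $T(\mu)$-distributed random variable with a $T(\nu)$-distributed one and estimating the expected absolute difference; by Lemma~\ref{finitemean} both $T(\mu)$ and $T(\nu)$ lie in ${\cal P}_1$, so $d(T(\mu),T(\nu))$ is well defined. Since $\mu,\nu\in{\cal P}_1$ are distributions on the real line with finite mean, there is an optimal coupling $(X_1,Y_1)$ of $\mu$ and $\nu$ with $E[|X_1-Y_1|]=d(\mu,\nu)$ (the monotone coupling achieves the infimum on $\R$). I would take $(X_2,Y_2)$ an independent copy of this coupling and $U$ uniform on $[0,1]$, independent of both. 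With $h$ as in (\ref{hdef}), the variable $h(U,X_1+X_2)$ has distribution $T(\mu)$ and $h(U,Y_1+Y_2)$ has distribution $T(\nu)$, so that
\begin{equation*}
d(T(\mu),T(\nu)) \leq E\big[\,|h(U,X_1+X_2)-h(U,Y_1+Y_2)|\,\big].
\end{equation*}
Because $\mu$ and $\nu$ have finite mean, $X_i$ and $Y_i$ are finite almost surely, so both arguments of $h$ lie in $[4,\infty)$, using $W_i\geq 2$.

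The crux is a Lipschitz bound for $x\mapsto h(u,x)$ valid on $[4,\infty)$. Differentiating (\ref{hdef}) gives
\begin{equation*}
\frac{\partial}{\partial x} h(u,x) = \frac{4u}{(x(1-u)+2u)^2},
\end{equation*}
which for each fixed $u\in[0,1)$ is positive and \emph{decreasing} in $x$; hence its supremum over $x\geq 4$ is attained at $x=4$ and equals $4u/(4-2u)^2 = u/(2-u)^2$ (the case $u=1$, where $h(1,x)=x$, satisfies the same bound). Consequently, for all $x,y\geq 4$,
\begin{equation*}
|h(u,x)-h(u,y)| \leq \frac{u}{(2-u)^2}\,|x-y|.
\end{equation*}

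I would then apply this with $x=X_1+X_2$ and $y=Y_1+Y_2$, combine it with the triangle inequality $|(X_1+X_2)-(Y_1+Y_2)|\leq|X_1-Y_1|+|X_2-Y_2|$, take expectations, and factor out the independent $U$ to obtain
\begin{equation*}
d(T(\mu),T(\nu)) \leq E\!\left[\frac{U}{(2-U)^2}\right]\big(E[|X_1-Y_1|]+E[|X_2-Y_2|]\big) = 2\,d(\mu,\nu)\int_0^1\frac{u}{(2-u)^2}\,du.
\end{equation*}
A short computation of the integral (substituting $w=2-u$) gives $\int_0^1 u/(2-u)^2\,du = 1-\log 2$, which produces the stated constant. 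The step requiring genuine care is the Lipschitz estimate: the derivative of $h$ in $x$ blows up as $x\to 0$, so the contraction constant stays below $1$ \emph{only} because the constraint $W_1+W_2\geq 4$ confines the arguments to $[4,\infty)$ and forces the supremum of $\partial_x h$ to be taken at $x=4$. This is the one place where the hypothesis $W_i\geq 2$ is essential, and it is the main obstacle to verify rigorously.
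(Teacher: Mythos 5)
Your proposal is correct and follows essentially the same route as the paper: couple $(X_i,Y_i)$ near-optimally, feed the sums into $h(U,\cdot)$ with a shared uniform $U$, and use the bound $\partial_x h(u,x)\le u/(2-u)^2$ for $x\ge 4$ (the paper writes this as $u/(4(1-u/2)^2)$, which is the same quantity) together with $\int_0^1 u/(2-u)^2\,du=1-\log 2$. The only cosmetic difference is that you invoke an exactly optimal (quantile) coupling while the paper works with an $\eps$-near-optimal coupling and lets $\eps\to 0$; both are valid.
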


\begin{proof}
Let $\eps > 0$.  Let $\mu, \nu \in {\cal P}_1$.  By the definition of $d$, on some probability space one can construct random variables $X_1$ and $Y_1$ such that $X_1$ has distribution $\mu$, $Y_1$ has distribution $\nu$, and $E[|X_1 - Y_1|] \leq d(\mu, \nu) + \eps$.  One can construct $X_2$ and $Y_2$, independently of $(X_1, Y_1)$, so that they satisfy these same conditions.  Let $U$ be a random variable that has a uniform distribution on $(0,1)$ and is independent of $(X_1, X_2, Y_1, Y_2)$.  Let $X = h(U, X_1 + X_2)$ and $Y = h(U, Y_1 + Y_2)$, where $h$ is the function defined in (\ref{hdef}).  Note that $X$ has the same distribution as $T(\mu)$, and $Y$ has the same distribution as $T(\nu)$.  For $x \geq 4$, $$\frac{\partial h}{\partial x}(u,x) = \frac{4u}{(x(1 - u) +2u)^2} \leq \frac{4u}{(4(1-u) + 2u)^2} = \frac{u}{4(1 - u/2)^2}.$$  Therefore, $$|X - Y| \leq \frac{U}{4(1 - U/2)^2} |(X_1 + X_2) - (Y_1 + Y_2)|.$$  Taking expectations, we get $$E[|X - Y|] \leq \frac{d(\mu, \nu) + \eps}{2} E \bigg[ \frac{U}{(1 - U/2)^2} \bigg] = \frac{d(\mu, \nu) + \eps}{2} \cdot 4(1 - \log 2).$$
Letting $\eps \rightarrow 0$ gives $E[|X - Y|] \leq 2(1 - \log 2) d(\mu, \nu)$, which implies the result.
\end{proof}

\begin{proof}[Proof of Proposition \ref{fixedpt}]
Because $T^2(\mu) \in {\cal P}_1$ for all $\mu \in {\cal P}$ (by Lemma \ref{finitemean}), any solution to the equation $T(\mu) = \mu$ must be in ${\cal P}_1$.  By Lemma \ref{contract}, the map $T$ is a strict contraction with respect to the Kantorovich-Rubinstein metric on ${\cal P}_1$.  Therefore, as noted in Lemma 5 of \cite{aldous05}, it follows from the Banach contraction theorem that the equation $T(\mu) = \mu$ has a unique solution $\mu^*$, and $T^n(\mu)$ converges to $\mu^*$ as $n \rightarrow \infty$ with respect to the Kantorovich-Rubinstein metric for all $\mu \in {\cal P}_1$.  Because convergence with respect to the Kantorovich-Rubinstein metric implies both weak convergence and convergence of means (see, for example, \cite{givens84}), the result follows.
\end{proof}

\subsection{Mergers of individual ancestral lines}

We now consider the merging of individual ancestral lines within a species.  Recall that, at time zero, there are $s_j$ species, and we sample $n_j$ individuals from each of the $s_j$ species.  Pairs of ancestral lines belonging to the same species merge at rate one.

Recall the definition of the trees ${\cal T}^{1,m}, \dots, {\cal T}^{m,m}$ derived from the species tree in section 2.3.  Let $N^{k,m}$ be the number of individual lineages remaining at time $\tau_{m-1}-$ that belong to the species represented by the tree ${\cal T}^{k,m}$. Note that this number could be zero when $s_j$ is finite because ${\cal T}^{k,m}$ is derived from a species tree starting from infinitely many species, whereas we only sample $n_j$ lineages from $s_j$ of these species.  Let $N^{k,m}_{i_1 \dots i_{\ell}}$ be the number of individual lineages, belonging to the species created by the merger at time $V_{i_1 \dots i_{\ell}}^{k,m}$, that remain at time $V^{k,m}_{i_1, \dots, i_{\ell-1}}-$.  If we know the values of $N^{k,m}_{i_1 \dots i_{\ell}}$ for all $i_1, \dots, i_{\ell} \in \{1,2\}$, then we obtain $N^{k,m}_{i_1 \dots i_{{\ell} -1}}$ by starting with $N^{k,m}_{i_1 \dots i_{\ell-1} 1} + N^{k,m}_{i_1 \dots i_{\ell-1} 2}$ lineages and running Kingman's coalescent for time $V^{k,m}_{i_1 \dots i_{\ell-2}} - V^{k,m}_{i_1 \dots i_{\ell-1}}$.  Also, let $W^{k,m}_{i_1 \dots i_\ell} = V^{k,m}_{i_1 \dots i_{\ell-1}} N^{k,m}_{i_1 \dots i_{\ell}}$ for $\ell \geq 1$, and let $W^{k,m} = \tau_{m-1} N^{k,m}$.

Fix a positive integer $d$.  Let $W_{i_1 \dots i_d}^{k,m,*,+} = \infty$, and let $W_{i_1 \dots i_d}^{k,m,*,-} = 2$.  Let $W_{i_1 \dots i_d}^{k,m,*}$ and $\bar{W}_{i_1 \dots i_d}^{k,m}$ both equal 
$\max\{W_{i_1 \dots i_d}^{k,m}, 2\}$.  Recall the definition of the function $h$ from (\ref{hdef}).  For $0 \leq \ell \leq d-1$ and $i_1, \dots, i_{\ell} \in \{1, 2\}$, let
\begin{align*}
W_{i_1 \dots i_{\ell}}^{k,m,*} &= h(U_{i_1 \dots i_{\ell}}^{k,m,*}, W_{i_1 \dots i_{\ell} 1}^{k,m,*} + W_{i_1 \dots i_{\ell}2}^{k,m,*}), \\
W_{i_1 \dots i_{\ell}}^{k,m,*,+} &= h(U_{i_1 \dots i_{\ell}}^{k,m,*}, W_{i_1 \dots i_{\ell} 1}^{k,m,*,+} + W_{i_1 \dots i_{\ell}2}^{k,m,*,+}), \\
W_{i_1 \dots i_{\ell}}^{k,m,*,-} &= h(U_{i_1 \dots i_{\ell}}^{k,m,*}, W_{i_1 \dots i_{\ell} 1}^{k,m,*,-} + W_{i_1 \dots i_{\ell}2}^{k,m,*,-}), \\
{\bar W}_{i_1 \dots i_{\ell}}^{k,m} &= h(U_{i_1 \dots i_{\ell}}^{k,m}, {\bar W}_{i_1 \dots i_{\ell} 1}^{k,m} + {\bar W}_{i_1 \dots i_{\ell}2}^{k,m}).
\end{align*}
Because the random variables $U^{k,m,*}_{i_1 \dots i_{\ell}}$ are independent and have a uniform distribution on $[0,1]$, the distribution of $W_{i_1 \dots i_{d-1}}^{k,m,*,+}$ is $T(\delta_{\infty})$, while the distribution of $W_{i_1 \dots i_{d-1}}^{k,m,-}$ is $T(\delta_2)$.  More generally, for $0 \leq \ell \leq d$, the distributions of $W_{i_1 \dots i_{\ell}}^{k,m,*,+}$ and $W_{i_1 \dots i_{\ell}}^{k,m,*,-}$ are $T^{d - \ell}(\delta_{\infty})$ and $T^{d - \ell}(\delta_2)$ respectively.  In particular, the distributions of $W^{k,m,*,+}$ and $W^{k,m,*,-}$ are $T^d(\delta_{\infty})$ and $T^d(\delta_2)$ respectively.  Also, because $h(u,x)$ is an increasing function of $x$, we have
\begin{equation}\label{Wcompare}
W^{k,m,*,-} \leq W^{k,m,*} \leq W^{k,m,*,+}.
\end{equation} 

To prove Theorem \ref{Speed}, we will consider a sequence $(m_j)_{j=1}^{\infty}$ tending to infinity.  That is, for the process in which there are $s_j$ species and $n_j$ individuals sampled from each of these species, we will consider the trees ${\cal T}^{k, m_j}$.  Throughout the rest of this section, we will occasionally drop the superscripts $k$ and $m_j$ to lighten notation, when doing so seems unlikely to cause confusion.

\begin{Lemma}\label{starbar}
We have $$\sup_{1 \leq k \leq m_j} \bigg| \frac{{\bar W}^{k,m_j}}{W^{k,m_j,*}} - 1 \bigg| \rightarrow_{\mathrm{p}} 0,$$ where $\rightarrow_{\mathrm{p}}$ denotes convergence in probability as $j \rightarrow \infty$.
\end{Lemma}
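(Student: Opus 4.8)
The plan is to compare the two recursions node by node through the logarithm. The starred value $W^{k,m,*}_{i_1\dots i_\ell}$ and the barred value $\bar W^{k,m}_{i_1\dots i_\ell}$ are built by the same function $h$ from the same leaf data (both equal $\max\{W^{k,m}_{i_1\dots i_d},2\}$ at level $d$), and differ only in that the former is driven by the exactly uniform $U^{k,m,*}_{i_1\dots i_\ell}$ and the latter by the coalescent uniforms $U^{k,m}_{i_1\dots i_\ell}$. Throughout write $m=m_j\to\infty$. First I would translate Lemmas \ref{timechange} and \ref{timechange2} into uniform multiplicative control of these uniforms. Writing $g=e^{-f_{m}}$ and, for a fixed index string, $V=V^{k,m}_{i_1\dots i_\ell}$, $V'=V^{k,m}_{i_1\dots i_{\ell-1}}$ for the child and parent split times (so $0<V<V'\le\tau_{m-1}$), one has $U^{k,m}_{i_1\dots i_\ell}=V/V'$ and $U^{k,m,*}_{i_1\dots i_\ell}=g(V)/g(V')$, whence
$$\frac{U^{k,m}_{i_1\dots i_\ell}}{U^{k,m,*}_{i_1\dots i_\ell}}=\frac{V/g(V)}{V'/g(V')},\qquad \frac{1-U^{k,m}_{i_1\dots i_\ell}}{1-U^{k,m,*}_{i_1\dots i_\ell}}=\frac{V'-V}{g(V')-g(V)}\cdot\frac{g(V')}{V'}.$$
By Lemma \ref{timechange} both factors $V/g(V)$ and $g(V')/V'$ tend to $1$, and by Lemma \ref{timechange2} the factor $(V'-V)/(g(V')-g(V))$ tends to $1$; since the suprema in those lemmas range over all $t\in(0,\tau_{m-1}]$, they dominate every split time of every subtree simultaneously. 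As there are only finitely many (at most $2^{d+1}$) index strings per tree, I obtain a single random variable $\eta_{m}\to_{\mathrm p}0$ with $|U^{k,m}_{i_1\dots i_\ell}/U^{k,m,*}_{i_1\dots i_\ell}-1|\le\eta_m$ and $|(1-U^{k,m}_{i_1\dots i_\ell})/(1-U^{k,m,*}_{i_1\dots i_\ell})-1|\le\eta_m$ for all $1\le k\le m$ and all strings with $\ell\le d$.

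Next I would prove two stability estimates for $h$. Since $h(u,x)\ge 2$ whenever $x\ge 2$, an easy induction shows every $\bar W$ and $W^{*}$ is at least $2$, so each second argument $x=\bar W_{\dots1}+\bar W_{\dots2}$ that appears is at least $4$. Writing the denominator of $h$ as $D(u)=(1-u)+2u/x$ and taking $u,u'$ with $u'=u(1+\alpha)$ and $1-u'=(1-u)(1+\beta)$, the identity $D(u')-D(u)=(1-u)\beta+(2u/x)\alpha$ gives $|D(u')-D(u)|\le\max(|\alpha|,|\beta|)\,D(u)$, and therefore
$$\bigl|\log h(u',x)-\log h(u,x)\bigr|=\Bigl|\log\tfrac{D(u)}{D(u')}\Bigr|\le-\log\bigl(1-\max(|\alpha|,|\beta|)\bigr),$$
a bound that is \emph{free of $x$}. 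Second, the elasticity identity $\frac{x}{h}\frac{\partial h}{\partial x}=\frac{2u}{x(1-u)+2u}$, which lies in $[0,1]$, shows $h$ is multiplicatively non-expansive in its second argument, so $|\log h(u,X')-\log h(u,X)|\le|\log X'-\log X|$ for $X,X'\ge 4$.

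Then I would propagate the discrepancy. Set $\rho_{i_1\dots i_\ell}=\log\bar W^{k,m}_{i_1\dots i_\ell}-\log W^{k,m,*}_{i_1\dots i_\ell}$, which vanishes at level $d$ because the leaf values agree. Decomposing one step of the recursion into a change of uniform at the fixed second argument $\bar W_{\dots1}+\bar W_{\dots2}$ followed by a change of second argument at the fixed uniform $U^{k,m,*}$, and using the two estimates above together with the elementary inequality $|\log(\bar W_{\dots1}+\bar W_{\dots2})-\log(W^{*}_{\dots1}+W^{*}_{\dots2})|\le\max(|\rho_{\dots1}|,|\rho_{\dots2}|)$, I get
$$|\rho_{i_1\dots i_\ell}|\le-\log(1-\eta_m)+\max\bigl(|\rho_{i_1\dots i_\ell1}|,|\rho_{i_1\dots i_\ell2}|\bigr).$$
Iterating over the $d$ levels yields $\sup_{1\le k\le m_j}|\rho^{k,m_j}|\le-d\log(1-\eta_m)\to_{\mathrm p}0$, and since $\bar W^{k,m}/W^{k,m,*}-1=e^{\rho^{k,m}}-1$ this gives the claim.

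The step I expect to be the main obstacle is the $x$-free bound on the $u$-sensitivity of $\log h$, together with the reason it is genuinely needed. The derivative $\partial_u\log h$ is of order $x$, and because the supremum runs over all $m$ species the largest second arguments do not stay bounded: the plus-variables have at best an inverse-square tail (cf.\ the proof of Lemma \ref{finitemean}), so $\max_{k\le m}W^{k,m,*,+}$ is typically of order $\sqrt m$, and a crude Lipschitz estimate in $u$ alone---which would cost a factor growing with $x$---is hopeless against an error $\eta_m$ that is merely $o_{\mathrm p}(1)$. The resolution, and the technical heart of the argument, is to control $u$ and $1-u$ multiplicatively at the same time; this is exactly what cancels the dangerous $x$-dependence, and it is the reason both time-change lemmas (rather than Lemma \ref{timechange} alone) are required.
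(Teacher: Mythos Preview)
Your proof is correct and follows the same overall strategy as the paper: both reduce the claim, via the identities $U_{i_1\dots i_\ell}=V/V'$ and $U^{*}_{i_1\dots i_\ell}=g(V)/g(V')$, to the uniform-in-$k$ multiplicative control of $U/U^{*}$ and $(1-U)/(1-U^{*})$ supplied by Lemmas~\ref{timechange} and~\ref{timechange2}, and then propagate the discrepancy through the $d$ levels of the recursion. The execution differs in one point worth noting. The paper bounds the ratio $h(u_1,x_1)/h(u_2,x_2)$ by $\max\{(1-u_2)/(1-u_1),\,u_2/u_1,\,x_1/x_2\}^2$ (and the reciprocal min), so the exponent doubles at each level and the final bound is $(1-R)^{-2^{d+1}}$; this is fine because $d$ is fixed. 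Your log-scale decomposition, combining the $x$-free bound $|\log h(u',x)-\log h(u,x)|\le-\log(1-\eta_m)$ with the elasticity estimate $|\partial\log h/\partial\log x|\le 1$, gives an additive recursion and hence the sharper $|\log(\bar W/W^{*})|\le -d\log(1-\eta_m)$. Your closing remark is also on point: the crucial feature, present in both arguments, is that the $u$-perturbation is handled multiplicatively in \emph{both} $u$ and $1-u$, which is exactly what eliminates the $x$-dependence and why Lemma~\ref{timechange2} is indispensable alongside Lemma~\ref{timechange}.
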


\begin{proof}
Recall the definition of the function $h$ from (\ref{hdef}).  Note that
$$\frac{h(u_1, x_1)}{h(u_2, x_2)} = \frac{(1 - u_2) + 2u_2/x_2}{(1 - u_1) + 2u_1/x_1}.$$
Therefore, using that $\min\{\frac{a}{b},\frac{c}{d}\}\leq\frac{a+c}{b+d}\leq\max\{\frac{a}{b},\frac{c}{d}\}$,
\begin{align*}
\frac{h(u_1, x_1)}{h(u_2, x_2)} &
\leq\max\bigg\{ \frac{1 - u_2}{1 - u_1}, \frac{u_2x_1}{u_1x_2} \bigg\}\\
&\leq\max\bigg\{ \frac{1 - u_2}{1 - u_1}, \left(\frac{u_2}{u_1}\right)^2,\left(\frac{x_1}{x_2}\right)^2 \bigg\}\\
&\leq \max\bigg\{ \frac{1 - u_2}{1 - u_1}, \frac{u_2}{u_1}, \frac{x_1}{x_2} \bigg\}^2
\end{align*}
and
$$\frac{h(u_1, x_1)}{h(u_2, x_2)} \geq\min\bigg\{ \frac{1 - u_2}{1 - u_1}, \frac{u_2}{u_1}, \frac{x_1}{x_2} \bigg\}^2.$$
Recall that $W_{i_1 \dots i_d}^{*} = {\bar W}_{i_1 \dots i_d}$, and for $0 \leq \ell \leq d-1$, we have
\begin{equation}\label{hratio}
\frac{{\bar W}_{i_1 \dots i_{\ell}}}{W^*_{i_1 \dots i_{\ell}}} = \frac{h(U_{i_1 \dots i_{\ell}}, {\bar W}_{i_1 \dots i_{\ell} 1} + {\bar W}_{i_1 \dots i_{\ell} 2})}{h(U^*_{i_1 \dots i_{\ell}}, W^*_{i_1 \dots i_{\ell} 1} + W^*_{i_1 \dots i_{\ell} 2})}.
\end{equation}
Recall also that, defining the random function $f_m$ as in Lemma \ref{Yulelem} and defining $V^*_{i_1 \dots i_{\ell}}$ as in the discussion following that lemma, we have
$V_{i_1 \dots i_{\ell}}^* = e^{-f_{m_j}(V_{i_1 \dots i_{\ell}})}$.  For $1 \leq \ell \leq d-1$, we have
\begin{equation}\label{Uratio}
\frac{U^*_{i_1 \dots i_{\ell}}}{U_{i_1 \dots i_{\ell}}} = \frac{V^*_{i_1 \dots i_{\ell}}}{V^*_{i_1 \dots i_{\ell - 1}}} \cdot \frac{V_{i_1 \dots i_{\ell - 1}}}{V_{i_1 \dots i_{\ell}}} = \frac{e^{-f_{m_j}(V_{i_1 \dots i_{\ell}})}}{V_{i_1 \dots i_{\ell}}} \cdot \frac{V_{i_1 \dots i_{\ell - 1}}}{e^{-f_{m_j}(V_{i_1 \dots i_{\ell-1}})}}
\end{equation}
and
$$\frac{1 - U^*_{i_1 \dots i_{\ell}}}{1 - U_{i_1 \dots i_{\ell}}} = \frac{(V^*_{i_1 \dots i_{\ell - 1}} - V^*_{i_1 \dots i_{\ell}})V_{i_1 \dots i_{\ell-1}}}{V^*_{i_1 \dots i_{\ell - 1}}(V_{i_1 \dots i_{\ell - 1}} - V_{i_1 \dots i_{\ell}})} = \frac{e^{-f_{m_j}(V_{i_1 \dots i_{\ell-1}})} - e^{-f_{m_j}(V_{i_1 \dots i_{\ell}})}}{V_{i_1 \dots i_{\ell - 1}} - V_{i_1 \dots i_{\ell}}} \cdot \frac{V_{i_1 \dots i_{\ell - 1}}}{e^{-f_{m_j}(V_{i_1 \dots i_{\ell - 1}})}}.$$  Likewise, for the $\ell = 0$ case,
$$\frac{U^*}{U} = \frac{e^{-f_{m_j}(V)}}{V} \cdot \frac{\tau_{m_j-1}}{e^{-f_{m_j}(\tau_{m_j-1})}}$$ and
$$\frac{1 - U^*}{1 - U} = \frac{e^{-f_{m_j}(\tau_{m_j-1})} - e^{-f_{m_j}(V)}}{\tau_{m_j-1} - V} \cdot \frac{\tau_{m_j-1}}{e^{-f_{m_j}(\tau_{m_j-1})}}.$$  Let $$R = \max \bigg\{ \sup_{0 < t \leq \tau_{m_j-1}} \bigg| \frac{t}{e^{-f_{m_j}(t)}} - 1 \bigg|, \: \sup_{0 < u < v \leq \tau_{m_j-1}} \bigg|\frac{e^{-f_{m_j}(u)} - e^{-f_{m_j}(v)}}{u - v} - 1 \bigg| \bigg\},$$ which converges in probability to zero as $j \rightarrow \infty$ by Lemmas \ref{timechange} and \ref{timechange2}.  Using the fact that if $|a-1| \leq R$ then $a$ and $1/a$ are both between $1-R$ and $1/(1-R)$, it follows from
these results with (\ref{hratio}), we obtain $$(1-R)^4 \leq \frac{{\bar W}_{i_1 \dots i_{d-1}}}{W^*_{i_1 \dots i_{d-1}}} \leq \bigg( \frac{1}{1 - R} \bigg)^4.$$  Then by induction, we end up with $$(1-R)^{2^{d+1}} \leq \frac{{\bar W^{k,m_j}}}{W^{k,m_j,*}} \leq \bigg( \frac{1}{1 - R} \bigg)^{2^{d+1}}$$ for all $k \in \{1, \dots, m_j\}$.  Because $d$ is a fixed positive integer, the result follows. 
\end{proof}

\begin{Lemma}\label{Rlem}
Suppose $s_j \gg m_j$.  Let $\eps > 0$.
For $i_1, \dots, i_d \in \{1, 2\}$ and $k \in \{1, \dots, m_j\}$, let $R^{k,m_j}_{i_1 \dots i_d}$ be the number of species, among the $s_j$ present at time zero, that are descended from the species created by the merger at time $V^{k,m_j}_{i_1 \dots i_d}$.  Then there exists $\delta > 0$ such that for sufficiently large $j$, we have $$P\bigg( R^{k,m_j}_{i_1 \dots i_d} > \frac{\delta s_j}{m_j} \bigg) > 1 - \eps.$$
\end{Lemma}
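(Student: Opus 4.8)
The plan is to identify $R^{k,m_j}_{i_1\dots i_d}$ with the number of sampled species lying in a single block of the species partition, and then to control this count through the asymptotic frequency of that block. The branch indexed by $i_1\dots i_d$ in the subtree ${\cal T}^{k,m_j}$ is, at every height in $(V^{k,m_j}_{i_1\dots i_d},V^{k,m_j}_{i_1\dots i_{d-1}})$, a single block $B$ of $\Psi_\infty$, and since the $s_j$ sampled species are the leaves labelled $1,\dots,s_j$ of ${\cal T}$ (recall $\Psi_{s_j}$ is the restriction of $\Psi_\infty$ to $\{1,\dots,s_j\}$), we have exactly $R^{k,m_j}_{i_1\dots i_d}=|\{1,\dots,s_j\}\cap B|$. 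I would first invoke the classical fact that $B$ has a well-defined asymptotic frequency $\Phi$ almost surely, together with the two-step construction of Kingman's coalescent: build a frequency tree whose branches carry masses and whose splits (read downwards from the root of each block) are independent uniform fragmentations, then drop the integer leaves $1,2,\dots$ independently according to those masses (the paintbox). Under this construction $\Phi$ is measurable with respect to the frequency tree, while, conditionally on the frequency tree, $R^{k,m_j}_{i_1\dots i_d}=\sum_{i=1}^{s_j}\1\{i\in B\}$ has the $\mathrm{Binomial}(s_j,\Phi)$ law.

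Second, I would bound $\Phi$ from below by writing $\Phi=\Phi_0\,P_1\cdots P_d$, where $\Phi_0$ is the frequency of the root block of ${\cal T}^{k,m_j}$ (one of the $m_j$ blocks of $\Psi_\infty(\tau_{m_j-1}-)$) and $P_1,\dots,P_d$ are the relative frequencies gained at the $d$ successive splits along the path $i_1\dots i_d$. It is classical that the frequencies of the $m_j$ blocks present when $\Psi_\infty$ has exactly $m_j$ blocks form a uniform (Dirichlet$(1,\dots,1)$) partition of the simplex; as the subtrees are placed in uniformly random order, $\Phi_0$ is a uniformly chosen coordinate, so $\Phi_0=_{\mathrm{d}}\mathrm{Beta}(1,m_j-1)$ and $P(\Phi_0>a/m_j)=(1-a/m_j)^{m_j-1}\to e^{-a}$. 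By the self-similarity of Kingman's coalescent the $P_\ell$ are i.i.d. uniform on $[0,1]$ and independent of $\Phi_0$, so $P(P_1\cdots P_d>b)$ can be made larger than $1-\eps/3$ by choosing $b>0$ small (this probability does not depend on $j$). Choosing $a$ so that $e^{-a}>1-\eps/3$ then yields $\Phi>ab/m_j$ with probability exceeding $1-2\eps/3$ for all large $j$.

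Finally, I would run a concentration step. On the event $\{\Phi>ab/m_j\}$ the conditional mean $s_j\Phi$ is at least $ab\,s_j/m_j$, which tends to infinity because $s_j\gg m_j$; Chebyshev's inequality applied to the conditional $\mathrm{Binomial}(s_j,\Phi)$ law gives $P\big(R^{k,m_j}_{i_1\dots i_d}\le\tfrac12 s_j\Phi\mid\Phi\big)\le 4/(s_j\Phi)\to 0$ uniformly on this event. Combining the two steps gives $P\big(R^{k,m_j}_{i_1\dots i_d}>\tfrac{ab}{2}\,s_j/m_j\big)>1-\eps$ for large $j$, so $\delta=ab/2$ works; since the distribution of $R^{k,m_j}_{i_1\dots i_d}$ does not depend on $k$ (exchangeability of the $m_j$ blocks) nor on the string $i_1\dots i_d$ (symmetry of the two children at each split, and only $2^d$ strings with $d$ fixed), the same $\delta$ serves in every case.

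I expect the main obstacle to be making the frequency representation fully rigorous: justifying that $\Phi=\Phi_0\,P_1\cdots P_d$ with the $P_\ell$ i.i.d. uniform and independent of $\Phi_0$ (this rests on the Dirichlet law of the block frequencies and the self-similarity of Kingman's coalescent), and, more delicately, justifying the conditional-$\mathrm{Binomial}$ claim, namely that conditionally on the frequency tree the memberships of $1,\dots,s_j$ in $B$ are i.i.d. $\mathrm{Bernoulli}(\Phi)$. Once these structural inputs are in place, the remaining estimates (the Beta tail, the product of uniforms, and the Chebyshev bound) are routine.
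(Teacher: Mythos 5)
Your proposal is correct and follows essentially the same route as the paper: both identify the asymptotic frequency of the relevant block as a product of a $\mathrm{Beta}(1,m_j-1)$ variable (frequency of a randomly ordered block at time $\tau_{m_j-1}$) with $d$ independent $\mathrm{Uniform}(0,1)$ fragmentation factors, then note that conditionally on this frequency $\Lambda$ the count $R^{k,m_j}_{i_1\dots i_d}$ is $\mathrm{Binomial}(s_j,\Lambda)$ and apply a concentration bound using $s_j\gg m_j$. The only difference is that you spell out the Beta tail and the Chebyshev step, which the paper leaves as ``elementary concentration results for the binomial distribution.''
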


\begin{proof}
{ For all $t \geq 0$, the partition given by Kingman's coalescent at time $t$, $\Pi_{\infty}(t)$, is an exchangeable random partition of $\N$.  Therefore, if $B$ is a block of the partition $\Pi_{\infty}(t)$, then the limit $$\lim_{n \rightarrow \infty} \frac{1}{n} \sum_{i=1}^n \1_{\{i \in B\}}$$ exists and is called the asymptotic frequency of $B$.  Let $K_{\infty}(t)$ be the number of blocks of $\Pi_{\infty}(t)$, and let $\tau_m = \inf\{t: K_{\infty}(t) = m\}$ be the first time that the coalescent has $m$ blocks.  Denote by $\Lambda(t)$ the sequence consisting of the asymptotic frequencies of the blocks of $\Pi_{\infty}(t)$, ranked in decreasing order.  It is shown in \cite{kingman82} that the distribution of $\Lambda(\tau_m)$ is uniform on the simplex $$\Delta_m = \{x_1 \geq \dots \geq x_m \geq 0: x_1 + \dots + x_m = 1\}.$$
In particular, if we choose one of the $m$ blocks uniformly at random, the distribution of the asymptotic frequency of this block is Beta$(1, m-1)$.  Furthermore, if we follow Kingman's coalescent in reversed time, so that blocks split instead of merging, and $B$ is a block with asymptotic frequency $\lambda$, then immediately after this block splits into two, the new blocks will have asymptotic frequencies $\lambda U$ and $\lambda(1-U)$, where $U$ has a uniform distribution on $[0,1]$.}

By the discussion above, the asymptotic frequency of the block of $\Psi_{\infty}(\tau_{m_j})$ corresponding to the species represented by the tree ${\cal T}^{k, m_j}$ has the Beta$(1, m_j - 1)$ distribution.  Moreover, let $\Lambda_{i_1 \dots i_d}$ be the asymptotic frequency of the block of $\Psi_{\infty}(V_{i_1 \dots i_d})$ created by the merger at time $V_{i_1 \dots i_d}$.  Then the distribution of $\Lambda_{i_1 \dots i_d}$ is the same the distribution of the product of $d+1$ independent random variables, one of them having the Beta$(1, m_j-1)$ distribution and $d$ of them having the Uniform$(0,1)$ distribution.  Because $d$ is a fixed positive integer, it follows that there exists $\delta > 0$ such that for all $j$, we have 
$$P\bigg(\Lambda_{i_1 \dots i_d} > \frac{2\delta}{m_j} \bigg) > 1 - \frac{\eps}{2}.$$ 
Conditional on $\Lambda_{i_1 \dots i_d}$, the distribution of $R_{i_1 \dots i_d}$ is Binomial$(s_j, \Lambda_{i_1 \dots i_d})$.  Because $s_j \gg m_j$, the result now follows from elementary concentration results for the binomial distribution.
\end{proof}

\begin{Lemma}\label{Llem}
Let
\begin{equation}\label{Lkmdef}
L^{k,m_j} = \max_{i_1, \dots, i_d \in \{1, 2\}} \bigg| \frac{W^{k,m_j}_{i_1 \dots i_d}}{{\bar W}^{k,m_j}_{i_1 \dots i_d}} - 1 \bigg|.
\end{equation} 
Suppose $s_j \gg m_j$ and $s_j n_j \gg m_j^2$.  Then $$\lim_{j \rightarrow \infty} E[L^{k,m_j} W^{k,m_j,*,+}] = 0.$$  
\end{Lemma}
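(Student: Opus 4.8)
The plan is to prove the lemma in two stages: first reduce the claim $E[L^{k,m_j} W^{k,m_j,*,+}] \to 0$ to the simpler statement $E[L^{k,m_j}] \to 0$, exploiting that $W^{k,m_j,*,+}$ has a distribution not depending on $j$; and then establish $E[L^{k,m_j}] \to 0$ by showing that each of the $2^d$ leaf quantities $W^{k,m_j}_{i_1 \dots i_d}$ is at least $2$ with probability tending to one. Two elementary facts drive the reduction. Since ${\bar W}_{i_1 \dots i_d} = \max\{W_{i_1 \dots i_d}, 2\}$, the leaf ratio equals $1$ when $W_{i_1 \dots i_d} \geq 2$ and equals $W_{i_1 \dots i_d}/2 \in [0,1)$ otherwise, so $L^{k,m_j} \in [0,1]$ and $L^{k,m_j} = \max_{i_1 \dots i_d} (1 - W_{i_1 \dots i_d}/2)^+$. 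Moreover, because the variables $U^{k,m_j,*}_{i_1 \dots i_{\ell}}$ are exactly uniform and independent, $W^{k,m_j,*,+}$ has distribution $T^d(\delta_{\infty})$ for every $j$; by Lemma \ref{finitemean} (which gives $T^2({\cal P}) \subset {\cal P}_1$) this distribution has finite mean for $d \geq 2$, so the identically distributed family $\{W^{k,m_j,*,+}\}_j$ is uniformly integrable.

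Given these facts, for any $K > 0$ I would split
\[
E[L^{k,m_j} W^{k,m_j,*,+}] \leq K\, E[L^{k,m_j}] + E\big[ W^{k,m_j,*,+} \1_{\{W^{k,m_j,*,+} > K\}} \big].
\]
The final term does not depend on $j$ and tends to $0$ as $K \to \infty$, so once $E[L^{k,m_j}] \to 0$ is known, letting $j \to \infty$ and then $K \to \infty$ yields the lemma. Since $0 \leq L^{k,m_j} \leq 1$, it suffices to prove $L^{k,m_j} \to_{\mathrm p} 0$, and by the union bound over the (fixed number of) leaves it is enough to show, for each string $i_1 \dots i_d$ and each $\eps \in (0,1)$, that $P(W^{k,m_j}_{i_1 \dots i_d} < 2(1 - \eps)) \to 0$.

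To bound a single leaf I would condition on the species genealogy. Recall $W^{k,m_j}_{i_1 \dots i_d} = V^{k,m_j}_{i_1 \dots i_{d-1}} N^{k,m_j}_{i_1 \dots i_d}$, and that $N^{k,m_j}_{i_1 \dots i_d}$ counts the ancestral lineages, at time $V^{k,m_j}_{i_1 \dots i_{d-1}}-$, of the $R^{k,m_j}_{i_1 \dots i_d}\, n_j$ individuals sampled from the $R^{k,m_j}_{i_1 \dots i_d}$ original species descended from the node at time $V^{k,m_j}_{i_1 \dots i_d}$. Within this clade two lineages may merge only once their species have merged, so removing the species constraint and letting all $R^{k,m_j}_{i_1 \dots i_d}\, n_j$ lineages coalesce as a single Kingman coalescent from time $0$ can only decrease the number of surviving blocks. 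Using the same merge clocks this gives, conditionally on the species tree, a monotone coupling with an auxiliary Kingman coalescent $\tilde K$ started from $R^{k,m_j}_{i_1 \dots i_d}\, n_j$ lineages such that $N^{k,m_j}_{i_1 \dots i_d} \geq \tilde K(V^{k,m_j}_{i_1 \dots i_{d-1}})$, and hence $W^{k,m_j}_{i_1 \dots i_d} \geq V^{k,m_j}_{i_1 \dots i_{d-1}} \tilde K(V^{k,m_j}_{i_1 \dots i_{d-1}})$. Writing $R = R^{k,m_j}_{i_1 \dots i_d}$ and $V = V^{k,m_j}_{i_1 \dots i_{d-1}}$, Lemma \ref{Rlem} (using $s_j \gg m_j$) gives $R \geq \delta s_j / m_j$ with probability close to one, while the identity $V^{k,m_j,*}_{i_1 \dots i_{d-1}} = \big(\prod_{\ell=0}^{d-1} U^{k,m_j,*}_{i_1 \dots i_{\ell}}\big)\, 2/(cm_j)$ together with Lemma \ref{timechange} forces $V \geq \eta_0/m_j$ with probability close to one for a suitable constant $\eta_0 > 0$ (the product of $d$ exact uniforms is bounded below with high probability, as $d$ is fixed). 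On the intersection of these events $R n_j V \gtrsim \delta \eta_0\, s_j n_j / m_j^2 \to \infty$ by the hypothesis $s_j n_j \gg m_j^2$, while $V \leq \tau_{m_j - 1} \to 0$ and $R n_j \to \infty$, so the domain conditions of Corollary \ref{kingcor} eventually hold; applying it conditionally gives $P(\tilde K(V) < 2(1-\eps)/V \mid \text{tree}) \leq \delta/(\eps - 2/(R n_j V)) \leq 2\delta/\eps$ there for large $j$. Unconditioning, $P(W^{k,m_j}_{i_1 \dots i_d} < 2(1-\eps))$ is at most a vanishing term plus $2\delta/\eps$; since $\delta$ is a free parameter in Lemma \ref{kinglem} and Corollary \ref{kingcor}, sending $j \to \infty$ and then $\delta \to 0$ gives the required convergence.

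The main obstacle is the conditional coupling in the third step: one must define rigorously, conditionally on the entire species genealogy, a stochastic domination between the species-constrained within-clade lineage process and a single unconstrained Kingman coalescent on $R n_j$ lineages, checking that $R$ and $V$ are measurable with respect to the species tree and that the within-species merging is conditionally independent of it. The remaining work—assembling the high-probability lower bounds on $R$ and $V$, verifying the conditions $V \leq t_0$ and $R n_j \geq M$ of Corollary \ref{kingcor}, and threading the free parameter $\delta$ through the double limit—is routine once this coupling is secured.
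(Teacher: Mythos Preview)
Your proposal is correct and follows essentially the same route as the paper: reduce to $L^{k,m_j}\to_{\mathrm p}0$ via the fixed, finite-mean law $T^d(\delta_\infty)$ of $W^{k,m_j,*,+}$ (you write out the truncation at level $K$, the paper phrases it as uniform integrability of the product), then bound each leaf probability $P(W_{i_1\dots i_d}<2(1-\eps))$ by coupling the clade's lineages to an unconstrained Kingman coalescent with $R_{i_1\dots i_d}n_j$ blocks and applying Corollary~\ref{kingcor}, using Lemma~\ref{Rlem} and the $V_{i_1\dots i_{d-1}}\gtrsim 1/m_j$ estimate (from $m_j\tau_{m_j-1}\to 2/c$ and Lemma~\ref{timechange}) to get $R n_j V\to_{\mathrm p}\infty$ under $s_jn_j\gg m_j^2$. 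Your flag that finiteness of the mean of $T^d(\delta_\infty)$ requires $d\geq 2$ is a fair caveat the paper leaves implicit; the coupling you worry about is exactly the one the paper invokes without further formalization.
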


\begin{proof}
Note that $W_{i_1 \dots i_d} = {\bar W}_{i_1 \dots i_d}$ unless $W_{i_1 \dots i_d} < 2$.  Therefore $|L^{k,m_j}| \leq 1$, and $L^{k,m_j} = 0$ unless $W_{i_1 \dots i_d} < 2$ for some $i_1, \dots, i_d$.  Because the distribution of $W^{k,m_j,*,+}$ is exactly $T^d(\delta_{\infty})$ for all $k$ and $m_j$, the collection of random variables $\{L^{k,m_j} W^{k,m_j,*,+}: j \in \N, 1 \leq k \leq m_j\}$ is uniformly integrable.  Therefore, noting also that the distribution of $L^{k,m_j} W^{k,m_j,*,+}$ does not depend on $k$, it suffices to show that $L^{k,m_j} W^{k,m_j,*,+} \rightarrow_{\mathrm{p}} 0$ as $m \rightarrow \infty$.  Because the random variables $W^{k,m_j,*,+}$ are identically distributed and finite, it suffices to show that $L^{k,m_j} \rightarrow_{\mathrm{p}} 0$ as $j \rightarrow \infty$.

Let $\eps > 0$.  We have
\begin{align}\label{Leq1}
P(L^{k,m_j} > \eps) &\leq 2^d P \bigg( \bigg| \frac{W_{i_1 \dots i_d}}{{\bar W}_{i_1 \dots i_d}} - 1 \bigg| > \eps \bigg) \nonumber \\
&= 2^d P\big(W_{i_1 \dots i_d} < 2(1 - \eps)\big) \nonumber \\
&= 2^d P\big(V_{i_1 \dots i_{d-1}} N_{i_1 \dots i_d} < 2(1 - \eps) \big).
\end{align}

Recall the definition of $R_{i_1 \dots i_d}$ from Lemma \ref{Rlem}.  Note that there are $R_{i_1 \dots i_d} n_j$ individual lineages at time zero descended from the species created by the merger at time $V_{i_1 \dots i_d}$.  Pairs of these individual lineages are subject to mergers at rate one, once the corresponding species lineages have merged, which means we can obtain a stochastic lower bound on the number of individual lineages by allowing all pairs of these lineages to merge at rate one.  Therefore, a stochastic lower bound for $N_{i_1 \dots i_d}$ can be obtained first constructing the species tree and then running the block-counting process associated with Kingman's coalescent, started with $R_{i_1 \dots i_d} n_j$ lineages, for time $V_{i_1 \dots i_{d-1}}$.  In particular, denoting by ${\cal G}$ the $\sigma$-field generated by 
the process $(\Psi_{\infty}(t), t \geq 0)$ that governs the species mergers, we have
$$P \big( V_{i_1 \dots i_{d-1}} N_{i_1 \dots i_d} < 2(1 - \eps) \big| {\cal G} \big) \leq P \big( V_{i_1 \dots i_{d-1}} K_{R_{i_1 \dots i_d} n_j}(V_{i_1 \dots i_{d-1}}) < 2(1 - \eps) \big| {\cal G} \big).$$
Now let $\delta = \eps^2$ and apply Corollary \ref{kingcor} with $V_{i_1 \dots i_{d-1}}$ in place of $t$ and $R_{i_1 \dots i_d} n_i$ in place of $n$ to get
\begin{equation}\label{Leq2}
P \big( V_{i_1 \dots i_{d-1}} K_{R_{i_1 \dots i_d} n_j}(V_{i_1 \dots i_{d-1}}) < 2(1 - \eps) \big| {\cal G} \big) \leq \frac{\eps^2}{\eps - 2/(V_{i_1 \dots i_{d-1}} R_{i_1 \dots i_d} n_i)}
\end{equation}
on the event that $V_{i_1 \dots i_{d-1}} < t_0$, $R_{i_1 \dots i_d} n_j \geq M$, and $2/(V_{i_1 \dots i_{d-1}} R_{i_1 \dots i_d} n_j) < \eps$.  Note that $P(V_{i_1 \dots i_{d-1}} < t_0) \geq P(\tau_{m_j-1} < t_0) \rightarrow 1$ as $j \rightarrow \infty$ by (\ref{cdi}).  Therefore, the result that $L^{k,m_j} \rightarrow_{\mathrm{p}} 0$, and therefore the result of the lemma, will follow from (\ref{Leq1}) and (\ref{Leq2}) provided we can show that
$$V_{i_1 \dots i_{d-1}} R_{i_1 \dots i_d} n_j \rightarrow_{\mathrm{p}} \infty, \hspace{.2in}\mbox{as }j \rightarrow \infty.$$

Recall from equation (\ref{Vdef}) that $V_{i_1 \dots i_{d-1}} = U U_{i_1} U_{i_1 i_2} \dots U_{i_1 \dots i_{d-1}} \tau_{m_j-1}$.  It follows from (\ref{cdi}) that $m_j \tau_{m_j-1} \rightarrow 2/c$ almost surely as $j \rightarrow \infty$.  Combining this observation with (\ref{Uratio}) and Lemma~\ref{timechange}, we see that there is a constant $\delta_1 > 0$ such that $P(V_{i_1 \dots i_{d-1}} > \delta_1/m_j) > 1-\eps/2$ for sufficiently large $j$.  By Lemma \ref{Rlem} and the assumption that $s_j \gg m_j$, there is a constant $\delta_2 > 0$ such that $P(R_{i_1 \dots i_d} > \delta_2 s_j/m_j) > 1 - \eps/2$ for sufficiently large $j$.  Combining these results, we get
$$P \bigg( V_{i_1 \dots i_{d-1}} R_{i_1 \dots i_d} n_j > \frac{\delta_1 \delta_2 s_j n_j}{m_j^2} \bigg) > 1 - \eps$$ for sufficiently large $j$.  Because $s_j n_j \gg m_j^2$ by assumption, the result follows. 
\end{proof}

\begin{Lemma}\label{WWstar}
Let $\eps > 0$.  Suppose $s_j \gg m_j$ and $s_j n_j \gg m_j^2$. There is a positive constant $t_0$, depending on $\eps$, such that if we define the events
$$A_j = \{\tau_{m_j-1} \leq t_0\}, \hspace{.5in}B_j = \bigg\{\sup_{1 \leq k \leq m_j} \bigg| \frac{{\bar W}^{k,m_j}}{W^{k,m_j,*}} - 1 \bigg| \leq \frac{1}{2} \bigg\},$$ then for sufficiently large $j$, we have
$$E \big[ |W^{k,m_j} - {\bar W}^{k,m_j}| \1_{A_j \cap B_j} \big] < \eps.$$
\end{Lemma}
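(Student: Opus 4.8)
The plan is to compare $W^{k,m_j}$ and $\bar{W}^{k,m_j}$ by propagating a discrepancy estimate up the tree ${\cal T}^{k,m_j}$, from the leaves at level $d$ to the root at level $0$. The two quantities obey almost the same recursion: $\bar{W}_{i_1\dots i_\ell} = h(U_{i_1\dots i_\ell}, \bar{W}_{i_1\dots i_\ell 1} + \bar{W}_{i_1\dots i_\ell 2})$ applies the map $h$ from (\ref{hdef}) exactly, whereas $W_{i_1\dots i_\ell} = V_{i_1\dots i_{\ell-1}} N_{i_1\dots i_\ell}$ is built from an actual Kingman block count $N_{i_1\dots i_\ell}$. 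The discrepancy therefore has two sources: the clipping $\bar{W}_{i_1\dots i_d} = \max\{W_{i_1\dots i_d},2\}$ imposed at the leaves, and the replacement of the random Kingman count by its deterministic $h$-approximation at each internal node. Throughout I write $\sigma = i_1\dots i_\ell$ and let $\sigma^- = i_1\dots i_{\ell-1}$ denote the parent index, so that $N_\sigma$ is obtained by running Kingman's coalescent started from $N_{\sigma 1}+N_{\sigma 2}$ lineages for the time $V_{\sigma^-}-V_\sigma$, and $U_\sigma = V_\sigma/V_{\sigma^-}$ by (\ref{Vdef}).

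First I would dispose of the leaf level. There $|W_{i_1\dots i_d} - \bar{W}_{i_1\dots i_d}| = (2-W_{i_1\dots i_d})^+$, which by the definition (\ref{Lkmdef}) of $L^{k,m_j}$ is at most $L^{k,m_j}\bar{W}_{i_1\dots i_d}$. Since the present hypotheses $s_j\gg m_j$ and $s_jn_j\gg m_j^2$ are exactly those of Lemma~\ref{Llem}, that lemma shows the leaf errors, weighted by the upper sandwich value, are negligible for large $j$: indeed $E[L^{k,m_j}W^{k,m_j,*,+}]\to 0$.

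Next, for each internal node $\sigma$ with $\ell \le d-1$, I would introduce the intermediate quantity $\widetilde{W}_\sigma = h(U_\sigma, W_{\sigma 1}+W_{\sigma 2})$, which a short computation shows equals $V_{\sigma^-}\cdot 2/\big((V_{\sigma^-}-V_\sigma)+2/(N_{\sigma 1}+N_{\sigma 2})\big)$, and split
$$|W_\sigma - \bar{W}_\sigma| \le |W_\sigma - \widetilde{W}_\sigma| + |\widetilde{W}_\sigma - \bar{W}_\sigma|.$$
The first term is the Kingman-versus-$h$ error. Conditioning on the species tree and the children's counts, $W_\sigma = V_{\sigma^-}N_\sigma$ is $V_{\sigma^-}$ times a Kingman block count, so on $A_j$ (which forces every relevant time below $t_0$) and on the event $N_{\sigma 1}+N_{\sigma 2}\ge M$, Lemma~\ref{kinglem} gives the relative bound $E[|W_\sigma - \widetilde{W}_\sigma|\mid\cdots]\le (\delta/2)\widetilde{W}_\sigma$. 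The second term is the propagated child error; since $\partial h/\partial x = 4u/(x(1-u)+2u)^2$ and, on $B_j$, the arguments of $h$ stay comparable to the sandwich values (each $\bar{W}\ge 2$ by the inductive monotonicity of $h$, so the sums are $\ge 4$), it is controlled by the Lipschitz estimate of Lemma~\ref{contract} applied to $|W_{\sigma 1}-\bar{W}_{\sigma 1}| + |W_{\sigma 2}-\bar{W}_{\sigma 2}|$.

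Finally I would assemble the bound by summing over the $O(2^d)$ nodes: choosing $\delta$ (and hence $t_0$ and $M$ in Lemma~\ref{kinglem}) small enough given $\eps$, then letting $j\to\infty$ so that the leaf error of Lemma~\ref{Llem} and the probability of any small-count event both vanish. The accumulated Lipschitz and relative-error factors across levels remain bounded because $d$ is fixed, and the expectations stay finite because the upper sandwich $W^{k,m_j,*,+}$ has distribution $T^d(\delta_\infty)\in{\cal P}_1$ and is uniformly integrable (Lemma~\ref{finitemean}), while on $B_j$ the ordering (\ref{Wcompare}) and $h$-monotonicity let us dominate each $\widetilde{W}_\sigma$ by the corresponding sandwich value. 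The hard part will be the small-count event $\{N_{\sigma 1}+N_{\sigma 2}<M\}$ at internal nodes, where Lemma~\ref{kinglem} is unavailable and the Lipschitz control degenerates; bounding its contribution requires combining Lemma~\ref{Rlem} with $s_jn_j\gg m_j^2$ to show that each node carries $\gtrsim s_jn_j/m_j\gg m_j$ lineages with high probability, and then absorbing the residual contribution using the finite mean of the sandwich. This finiteness and the small-count control are in fact coupled, since the relative Kingman bound $(\delta/2)\widetilde{W}_\sigma$ is only useful once integrability of $\widetilde{W}_\sigma$ has been secured through the sandwich domination.
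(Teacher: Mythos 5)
Your overall architecture matches the paper's: isolate the leaf error via $L^{k,m_j}$ and Lemma \ref{Llem}, and at each internal node split the discrepancy into a Kingman-versus-$h$ term (controlled by Lemma \ref{kinglem} on $A_j$ when $N_{\sigma 1}+N_{\sigma 2}\geq M$) plus a propagated child error. However, the two issues you flag as ``the hard part'' are genuine gaps in your version, and the paper's proof closes them by a device you are missing: it propagates \emph{relative} errors $|W_\sigma/\bar{W}_\sigma - 1|$ rather than absolute errors $|W_\sigma - \bar{W}_\sigma|$. For the ratio one has the unconditional estimate $|h(u,x_1+x_2)/h(u,y_1+y_2)-1| \leq |x_1/y_1 - 1| + |x_2/y_2 - 1|$, valid for all positive arguments, so the per-level propagation constant is $2+\delta$ with no lower bound needed on $W_{\sigma 1}+W_{\sigma 2}$. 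Your Lipschitz route via $\partial h/\partial x$, by contrast, requires the argument of $h$ to stay at least $4$ (the derivative behaves like $1/u$ as $x\to 0$, which is not integrable), and it is precisely the un-clipped sums $W_{\sigma 1}+W_{\sigma 2}$ at internal nodes that can be small; the event $B_j$ controls only the root ratio and gives you nothing about intermediate nodes.

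The same relative-error formulation is what disposes of the small-count event $D_\sigma^c=\{N_{\sigma 1}+N_{\sigma 2}<M\}$ deterministically, with no appeal to Lemma \ref{Rlem}. The paper chooses $t_0 < \delta/M$, so that on $A_j\cap D_\sigma^c$ one has $W_\sigma \leq \tau_{m_j-1}(N_{\sigma 1}+N_{\sigma 2}) \leq M t_0 \leq \delta$, while $\bar{W}_\sigma \geq 2$ always; hence $|W_\sigma/\bar{W}_\sigma - 1|\leq 1$, whereas the right-hand side of the recursive inequality is already at least $\delta/2 + (1+\delta/2)(1-\delta/2)\geq 1$, so the induction step holds for free on $D_\sigma^c$. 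In your absolute-error version the bad-event error is only bounded by $\bar{W}_\sigma$ itself, which at an internal node is not dominated by any of the sandwich variables of known distribution, so converting your high-probability statement (via Lemma \ref{Rlem}) into the required expectation bound would force you to establish uniform integrability of quantities you have no handle on. The fix is not additional probabilistic input but the change of variables to ratios, followed by multiplying through by $\bar{W}^{k,m_j}\1_{B_j}\leq \tfrac{3}{2}W^{k,m_j,*,+}$ only once, at the root, where $B_j$ and the known law $T^d(\delta_\infty)$ of $W^{k,m_j,*,+}$ make the expectation finite.
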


\begin{proof}
By Proposition \ref{fixedpt}, we can choose a positive integer $d$ large enough that the mean of the distribution $T^d(\delta_{\infty})$ is less than $4 \gamma/3$.  Choose $0 < \delta < 1$ small enough that $(\delta/2) \sum_{n=0}^{d-1} (2 + \delta)^n < \eps/2$ and $6^d \gamma \delta < \eps/4$.  
Choose a positive integer $M$ and then choose $t_0 < \delta/M$ such that if $t \leq t_0$ and $n \geq M$, then the conclusion of Lemma \ref{kinglem} holds for this choice of $\delta$.

Suppose $0 \leq \ell \leq d - 1$.  Then, dropping the superscripts $k$ and $m_j$ to lighten notation,
$$\bigg| \frac{W_{i_1 \dots i_{\ell}}}{{\bar W}_{i_1 \dots i_{\ell}}} - 1 \bigg| = \bigg| \frac{W_{i_1 \dots i_{\ell}}}{h(U_{i_1 \dots i_{\ell}}, W_{i_1 \dots i_{\ell} 1} + W_{i_1 \dots i_{\ell} 2})} \cdot \frac{{h(U_{i_1 \dots i_{\ell}}, W_{i_1 \dots i_{\ell} 1} + W_{i_1 \dots i_{\ell} 2})}}{{h(U_{i_1 \dots i_{\ell}}, {\bar W}_{i_1 \dots i_{\ell} 1} + {\bar W}_{i_1 \dots i_{\ell} 2})}} - 1 \bigg|.$$
Recall the definition of the function $h$ from (\ref{hdef}).  Because $$|xy - 1| = |x - 1 + x(y-1)| \leq |x - 1| + x|y-1|$$ for positive real numbers $x$ and $y$, we have
\begin{align*}
\bigg| \frac{W_{i_1 \dots i_{\ell}}}{{\bar W}_{i_1 \dots i_{\ell}}} - 1 \bigg| &\leq \bigg| \frac{W_{i_1 \dots i_{\ell}}}{h(U_{i_1 \dots i_{\ell}}, W_{i_1 \dots i_{\ell} 1} + W_{i_1 \dots i_{\ell} 2})} - 1 \bigg| \\
&\hspace{.3in}+ \frac{W_{i_1 \dots i_{\ell}}}{h(U_{i_1 \dots i_{\ell}}, W_{i_1 \dots i_{\ell} 1} + W_{i_1 \dots i_{\ell} 2})} \bigg| \frac{{h(U_{i_1 \dots i_{\ell}}, W_{i_1 \dots i_{\ell} 1} + W_{i_1 \dots i_{\ell} 2})}}{{h(U_{i_1 \dots i_{\ell}}, {\bar W}_{i_1 \dots i_{\ell} 1} + {\bar W}_{i_1 \dots i_{\ell} 2})}} - 1 \bigg|.
\end{align*}
If $0 \leq u \leq 1$ and $x_1, x_2, y_1, y_2 > 0$, then
 $$\bigg| \frac{h(u, x_1 + x_2)}{h(u, y_1 + y_2)} - 1 \bigg| = \bigg| \frac{(1 - u) + \frac{2u}{y_1 + y_2}}{(1 - u) + \frac{2u}{x_1 + x_2}} - 1 \bigg| \leq \bigg| \frac{x_1 + x_2}{y_1 + y_2} - 1 \bigg| \leq \bigg| \frac{x_1}{y_1} - 1 \bigg| + \bigg| \frac{x_2}{y_2} - 1 \bigg|.$$  Therefore,
\begin{align}\label{Wlbound}
\bigg| \frac{W_{i_1 \dots i_{\ell}}}{{\bar W}_{i_1 \dots i_{\ell}}} - 1 \bigg| &\leq \bigg| \frac{W_{i_1 \dots i_{\ell}}}{h(U_{i_1 \dots i_{\ell}}, W_{i_1 \dots i_{\ell} 1} + W_{i_1 \dots i_{\ell} 2})} - 1 \bigg| \nonumber \\
&\hspace{.5in}+ \frac{W_{i_1 \dots i_{\ell}}}{h(U_{i_1 \dots i_{\ell}}, W_{i_1 \dots i_{\ell} 1} + W_{i_1 \dots i_{\ell} 2})} \bigg( \bigg| \frac{W_{i_1 \dots i_{\ell} 1}}{{\bar W}_{i_1 \dots i_{\ell} 1}} - 1 \bigg| + \bigg| \frac{W_{i_1 \dots i_{\ell} 2}}{{\bar W}_{i_1 \dots i_{\ell} 2}} - 1 \bigg| \bigg).
\end{align}
Interpreting $V_{i_1 \dots i_{\ell-1}}$ to be $\tau_{m_j-1}$ when $\ell = 0$, we have
\begin{align*}
\bigg| \frac{W_{i_1 \dots i_{\ell}}}{h(U_{i_1 \dots i_{\ell}}, W_{i_1 \dots i_{\ell} 1} + W_{i_1 \dots i_{\ell} 2})} - 1 \bigg| &= \bigg| \frac{N_{i_1 \dots i_{\ell}} V_{i_1 \dots i_{\ell - 1}}}{h(V_{i_1 \dots i_{\ell}}/V_{i_1 \dots i_{\ell - 1}}, V_{i_1 \dots i_{\ell}}(N_{i_1 \dots i_{\ell} 1} + N_{i_1 \dots i_{\ell} 2}))} - 1 \bigg| \\
&= \bigg| N_{i_1 \dots i_{\ell}} \bigg( \frac{(V_{i_1 \dots i_{\ell-1}} - V_{i_1 \dots i_{\ell}}) + 2/(N_{i_1 \dots i_{\ell} 1} + N_{i_1 \dots i_{\ell} 2})}{2} \bigg) - 1 \bigg|.
\end{align*}
Recall that $N_{i_1 \dots i_{\ell}}$ is obtained by running Kingman's coalescent started with $N_{i_1 \dots i_{\ell} 1} + N_{i_1 \dots i_{\ell} 2}$ blocks for time $V_{i_1 \dots i_{\ell-1}} - V_{i_1 \dots i_\ell}$. 
Now let ${\cal F}_{\ell,j}$ denote the $\sigma$-field generated by the process $(\Psi_{\infty}(t), t \geq 0)$
and the random variables $W_{i_1 \dots i_b}$ with $\ell+1 \leq b \leq d$ and $i_1 \dots i_b \in \{1, 2\}$.  By Lemma \ref{kinglem}, 
$$E \bigg[ \bigg| \frac{W_{i_1 \dots i_{\ell}}}{h(U_{i_1 \dots i_{\ell}}, W_{i_1 \dots i_{\ell} 1} + W_{i_1 \dots i_{\ell} 2})} - 1 \bigg| \1_{A_j} \Big| {\cal F}_{\ell, j} \bigg] \leq \frac{\delta}{2}$$ on the event $D_{i_1 \dots i_{\ell}} = \{N_{i_1 \dots i_{\ell} 1} + N_{i_1 \dots i_{\ell} 2} \geq M\}$.
Combining this result with (\ref{Wlbound}) yields that on $D_{i_1 \dots i_{\ell}}$,
\begin{equation}\label{newWrat}
E \bigg[ \bigg| \frac{W_{i_1 \dots i_{\ell}}}{{\bar W}_{i_1 \dots i_{\ell}}} - 1 \bigg| \1_{A_j} \Big| {\cal F}_{\ell, j} \bigg] \leq \frac{\delta}{2} + \bigg(1 + \frac{\delta}{2} \bigg) \bigg( \bigg| \frac{W_{i_1 \dots i_{\ell} 1}}{{\bar W}_{i_1 \dots i_{\ell} 1}} - 1 \bigg| + \bigg| \frac{W_{i_1 \dots i_{\ell} 2}}{{\bar W}_{i_1 \dots i_{\ell} 2}} - 1 \bigg| \bigg) \1_{A_j}.
\end{equation}
Now suppose $D_{i_1 \dots i_{\ell}}^c$ occurs.  Then $W_{i_1 \dots i_{\ell}} = V_{i_1 \dots i_{\ell}} N_{i_1 \dots i_{\ell}} \leq V_{i_1 \dots i_{\ell}}(N_{i_1 \dots i_{\ell 1}} + N_{i_1 \dots i_{\ell} 2}) \leq \tau_{m_j - 1} M$.
Because ${\bar W}_{i_1 \dots i_{\ell}} \geq 2$ by construction in view of the definition of the function $h$, it follows that
$W_{i_1 \dots i_{\ell}}/{\bar W}_{i_1 \dots i_{\ell}} \leq \tau_{m_j - 1}M/2 \leq M t_0/2 \leq \delta/2$ on $A_j$.  By the same reasoning, if $N_{i_1 \dots i_{\ell} 1} < M$, we have $W_{i_1 \dots i_{\ell} 1}/{\bar W}_{i_1 \dots i_{\ell} 1} \leq \delta/2$ on $A_j$, and likewise if $N_{i_1 \dots i_{\ell} 2} < M$.  Thus, on the event $D_{i_1 \dots i_{\ell}}^c$, the left-hand side of (\ref{newWrat}) is bounded above by 1, while the right-hand side is bounded below by $\delta/2 + (1 + \delta/2)(1 - \delta/2) \geq 1$ on $A_j$.  Therefore, (\ref{newWrat}) also holds on $D_{i_1, \dots i_{\ell}}^c$.
Now taking conditional expectations with respect to ${\cal F}_{d-1,j}$ on both sides of (\ref{newWrat}), we get
\begin{equation}\label{Winduct}
E \bigg[ \bigg| \frac{W_{i_1 \dots i_{\ell}}}{{\bar W}_{i_1 \dots i_{\ell}}} - 1 \bigg| \1_{A_j} \Big| {\cal F}_{d-1,j} \bigg] \leq \frac{\delta}{2} + (2 + \delta) E \bigg[ \bigg| \frac{W_{i_1 \dots i_{\ell} 1}}{{\bar W}_{i_1 \dots i_{\ell} 1}} - 1 \bigg| \1_{A_j} \Big| {\cal F}_{d-1,j} \bigg].
\end{equation}

Recall the definition of $L^{k,m_j}$ from (\ref{Lkmdef}).  Note that $L^{k,m_j}$ is ${\cal F}_{d-1,j}$-measurable and $A_j \in {\cal F}_{d-1,j}$.  Therefore, when $\ell = d-1$, equation (\ref{Winduct}) implies
$$E \bigg[ \bigg| \frac{W_{i_1 \dots i_{d-1}}}{{\bar W}_{i_1 \dots i_{d-1}}} - 1 \bigg| \1_{A_j} \Big| {\cal F}_{d-1,j} \bigg] \leq \bigg( \frac{\delta}{2} + (2 + \delta) L^{k,m_j}\bigg) \1_{A_j}.$$  Applying (\ref{Winduct}) inductively as $\ell$ goes from $d-2$ down to $0$ gives
$$E \bigg[ \bigg| \frac{W^{k,m_j}}{{\bar W}^{k,m_j}} - 1 \bigg| \1_{A_j} \Big| {\cal F}_{d-1,j} \bigg] \leq \bigg( \frac{\delta}{2} \sum_{n=0}^{d - 1} (2 + \delta)^n + (2 + \delta)^d L^{k,m_j} \bigg) \1_{A_j} \leq \bigg( \frac{\eps}{2} + 3^d L^{k,m_j} \bigg) \1_{A_j}.$$
Because ${\bar W}^{k,m_j}$ is ${\cal F}_{d-1,j}$-measurable and $B_j \in {\cal F}_{d-1,j}$, we can multiply both sides by ${\bar W}^{k,m_j} \1_{B_j}$ to get
\begin{align*}
E \big[ |W^{k,m_j} - {\bar W}^{k,m_j}| \1_{A_j \cap B_j} \big| {\cal F}_{d-1,j} \big] &\leq \bigg( \frac{\eps}{2} + 3^d L^{k,m_j}\bigg) {\bar W}^{k,m_j} \1_{A_j \cap B_j} \nonumber \\
&\leq \frac{3}{2} \bigg(\frac{\eps}{2} + 3^d L^{k,m_j} \bigg) W^{k,m_j,*,+}.
\end{align*}
Taking expectations of both sides, we get
$$E \big[ |W^{k,m_j} - {\bar W}^{k,m_j}| \1_{A_j \cap B_j} \big] \leq \frac{3 \eps}{4} + \frac{3^{d+1}}{2} E[W^{k,m_j,*,+} L^{k,m_j}].$$  The result now follows from Lemma \ref{Llem}.
\end{proof}

\begin{Lemma}\label{Wgamma}
Define $\mu^*$ as in Proposition \ref{fixedpt}, and let $\gamma = \int_2^{\infty} x \: \mu^*(dx)$ be the mean of $\mu^*$.  Suppose $s_j \gg m_j$ and $s_j n_j \gg m_j^2$.  Then $$\frac{1}{m_j} \sum_{k=1}^{m_j} W^{k,m_j} \rightarrow_{\mathrm{p}} \gamma,$$ where $\rightarrow_{\mathrm{p}}$ denotes convergence in probability as $j \rightarrow \infty$.
\end{Lemma}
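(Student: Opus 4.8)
The plan is to approximate the empirical average $\frac{1}{m_j}\sum_{k=1}^{m_j} W^{k,m_j}$ by the corresponding average of the starred variables $W^{k,m_j,*}$, which can in turn be controlled by a law of large numbers because, by (\ref{Wcompare}), they are sandwiched between the families $W^{k,m_j,*,-}$ and $W^{k,m_j,*,+}$ with explicit distributions $T^d(\delta_2)$ and $T^d(\delta_{\infty})$. Throughout, $d$ is a \emph{fixed} positive integer governing the construction of the starred and barred variables; the idea is to send $j \rightarrow \infty$ first with $d$ held fixed, and only afterwards invoke Proposition \ref{fixedpt} to make $T^d(\delta_2)$ and $T^d(\delta_{\infty})$, and hence their means, as close to $\mu^*$ and $\gamma$ as we wish by taking $d$ large. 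Concretely, fix $\eta > 0$ and, using Proposition \ref{fixedpt}, choose $d \geq 2$ large enough that the means $\mu_d^-$ and $\mu_d^+$ of $T^d(\delta_2)$ and $T^d(\delta_{\infty})$ both lie within $\eta$ of $\gamma$; since the map $W_1, W_2 \mapsto h(U, W_1 + W_2)$ is increasing, $T$ is monotone with respect to stochastic order, so that $\mu_d^- \leq \gamma \leq \mu_d^+$.

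I would then carry out three approximation steps, each contributing an error that vanishes in probability as $j \rightarrow \infty$. (i) \emph{Replace $W^{k,m_j}$ by ${\bar W}^{k,m_j}$.} The average $\frac{1}{m_j}\sum_k |W^{k,m_j} - {\bar W}^{k,m_j}| \1_{A_j \cap B_j}$ has expectation $\frac{1}{m_j}\sum_k E[|W^{k,m_j} - {\bar W}^{k,m_j}| \1_{A_j \cap B_j}]$, which Lemma \ref{WWstar} bounds by $\eps$ for large $j$ (for a suitable $t_0$ defining $A_j$); since $P(A_j \cap B_j) \rightarrow 1$ by (\ref{cdi}) and Lemma \ref{starbar}, Markov's inequality gives $\frac{1}{m_j}\sum_k |W^{k,m_j} - {\bar W}^{k,m_j}| \rightarrow_{\mathrm{p}} 0$. (ii) \emph{Replace ${\bar W}^{k,m_j}$ by $W^{k,m_j,*}$.} Here I would write $\frac{1}{m_j}\sum_k |{\bar W}^{k,m_j} - W^{k,m_j,*}| \leq \big( \sup_{1 \leq k \leq m_j} |{\bar W}^{k,m_j}/W^{k,m_j,*} - 1| \big) \cdot \frac{1}{m_j}\sum_k W^{k,m_j,*,+}$; the supremum tends to zero in probability by Lemma \ref{starbar}, while the average of the $W^{k,m_j,*,+}$ is bounded in probability by step (iii), so the product is $o_{\mathrm{p}}(1)$.

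For step (iii), the crucial observation is that the starred variables are \emph{i.i.d.\ across $k$}: each $W^{k,m_j,*,+}$ (resp.\ $W^{k,m_j,*,-}$) is a deterministic function of the variables $U^{k,m_j,*}_{i_1 \dots i_{\ell}}$, $0 \leq \ell \leq d-1$, attached to the $k$-th subtree, and these families are independent across $k$ and exactly uniform by the Yule time-change of Section \ref{KingmanYule}. Since $T^d(\delta_{\infty}) \in {\cal P}_1$ for $d \geq 2$ (Lemma \ref{finitemean}), the weak law of large numbers yields $\frac{1}{m_j}\sum_k W^{k,m_j,*,\pm} \rightarrow_{\mathrm{p}} \mu_d^{\pm}$, and by (\ref{Wcompare}) the average of the $W^{k,m_j,*}$ is trapped between these limits, hence lies in $(\gamma - \eta, \gamma + \eta)$ up to an $o_{\mathrm{p}}(1)$ term. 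Combining (i)--(iii) by the triangle inequality shows that $\frac{1}{m_j}\sum_k W^{k,m_j}$ is within $2\eta$ of $\gamma$ with probability tending to one, and since $\eta > 0$ was arbitrary the lemma follows.

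The delicate point, and the place where care is most needed, is the interchange of the two limits: the starred construction is available only for a fixed truncation depth $d$, whereas the fixed point $\gamma$ emerges solely as $d \rightarrow \infty$. The argument must therefore freeze $d$ (large, chosen from $\eta$ via Proposition \ref{fixedpt}) before letting $j \rightarrow \infty$, and absorb the resulting $O(\eta)$ discrepancy between $\mu_d^{\pm}$ and $\gamma$ into the final bound. A secondary technical obstacle is ensuring that the per-species estimate of Lemma \ref{WWstar} survives averaging over the $m_j$ species; this is exactly where the uniform integrability of $\{L^{k,m_j} W^{k,m_j,*,+}\}$ supplied by Lemma \ref{Llem}, together with the fact that the relevant distributions do not depend on $k$, is used.
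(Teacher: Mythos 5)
Your proposal is correct and follows essentially the same route as the paper's proof: fix $\eta$, choose $d$ via Proposition \ref{fixedpt} so the means of $T^d(\delta_2)$ and $T^d(\delta_\infty)$ are within $\eta$ of $\gamma$, apply the law of large numbers to the i.i.d.\ starred bounds and squeeze via (\ref{Wcompare}), pass from $W^{k,m_j,*}$ to ${\bar W}^{k,m_j}$ by Lemma \ref{starbar}, and from ${\bar W}^{k,m_j}$ to $W^{k,m_j}$ by Lemma \ref{WWstar} with Markov's inequality on $A_j \cap B_j$, absorbing all errors into the $\eta$-budget. The only cosmetic difference is that you phrase step (i) as an $o_{\mathrm{p}}(1)$ claim, whereas the paper (taking $\eps = \eta^2$) only needs and only proves that the corresponding probability is eventually at most $\eta$; your final "within $2\eta$, $\eta$ arbitrary" conclusion handles this correctly in any case.
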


\begin{proof}
Let $\eta > 0$, and let $\eps = \eta^2$.  By Proposition \ref{fixedpt}, we can choose a positive integer $d$ sufficiently large that the mean of $T^d(\delta_2)$ is greater than $\gamma - \eta$, and the mean of $T^d(\delta_{\infty})$ is less than $\gamma + \eta$.  Because the random variables $W^{k,m_j,*,+}$ are independent of one another and have the distribution $T^d(\delta_{\infty})$, and the random variables $W^{k,m_j,*,-}$ are independent of one another and have distribution $T^d(\delta_2)$, it follows from the Law of Large Numbers and the assumption that $\lim_{j\rightarrow \infty} m_j = \infty$ that
$$\lim_{j \rightarrow \infty} P \bigg(\gamma - \eta < \frac{1}{m_j} \sum_{k=1}^{m_j} W^{k,m_j,*,-} \leq \frac{1}{m_j} \sum_{k=1}^{m_j} W^{k,m_j,*,+} < \gamma + \eta \bigg) = 1.$$
Therefore, by (\ref{Wcompare}),
$$\lim_{j \rightarrow \infty} P \bigg( \gamma - \eta < \frac{1}{m_j} \sum_{k=1}^{m_j} W^{k,m_j,*} < \gamma + \eta \bigg) = 1.$$
It now follows from Lemma \ref{starbar} that
\begin{equation}\label{Wbarlim}
\lim_{j \rightarrow \infty} P \bigg( \gamma - \eta < \frac{1}{m_j} \sum_{k=1}^{m_j} {\bar W}^{k,m_j} < \gamma + \eta \bigg) = 1.
\end{equation}
Define $t_0$ and the events $A_j$ and $B_j$ as in Lemma \ref{WWstar}.
We have
\begin{align}\label{W3tms}
P \bigg( \bigg| \frac{1}{m_j} \sum_{k=1}^{m_j} W^{k,m_j} - \gamma \bigg| \geq 2 \eta \bigg) &\leq P(A_j^c) + P(B_j^c) + P \bigg( \bigg| \frac{1}{m_j} \sum_{k=1}^{m_j} {\bar W}^{k,m_j} - \gamma \bigg| \geq \eta \bigg) \nonumber \\
&\hspace{.2in} + P \bigg( \bigg| \frac{1}{m_j} \sum_{k=1}^{m_j} W^{k,m_j} - \frac{1}{m_j} \sum_{k=1}^{m_j} {\bar W}^{k,m_j} \bigg| \1_{A_j \cap B_j} \geq \eta \bigg).
\end{align}
Note that $\lim_{j \rightarrow \infty} P(A_j^c) = 0$ by (\ref{cdi}) and $\lim_{j \rightarrow \infty} P(B_j^c) = 0$ by Lemma \ref{starbar}.  The third term on the right-hand side of (\ref{W3tms}) tends to zero as $j \rightarrow \infty$ by (\ref{Wbarlim}).  By Lemma \ref{WWstar} and Markov's Inequality, for sufficiently large $j$ we have
$$P \bigg( \bigg| \frac{1}{m_j} \sum_{k=1}^{m_j} W^{k,m_j} - \frac{1}{m_j} \sum_{k=1}^{m_j} {\bar W}^{k,m_j} \bigg| \1_{A_j \cap B_j} \geq \eta \bigg) \leq \frac{1}{\eta m_j} \sum_{k=1}^{m_j} E \big[ |W^{k,m_j} - {\bar W}^{k,m_j}| \1_{A_j \cap B_j} \big] \leq \frac{\eps}{\eta} = \eta.$$
Because $\eta > 0$ is arbitrary, the result follows.
\end{proof}

\subsection{Proof of Theorem 1}

\begin{proof}[Proof of Theorem \ref{Speed}]
For positive integers $j$, let $m_j^+ = 2(1 + \eps)/ct_j$ and $m_j^- = 2(1 - \eps)/ct_j$.  It follows from (\ref{cdi}) that $m \tau_m \rightarrow 2/c$ as $m \rightarrow \infty$, which implies that almost surely $\tau_{m_j^+} \leq t_j \leq \tau_{m_j^-}$ for sufficiently large $j$.  Therefore, almost surely
\begin{equation}\label{Nsqueeze}
N(\tau_{m_j^-}) \leq N(t_j) \leq N(\tau_{m_j^+})
\end{equation}
for sufficiently large $j$.  The assumptions of Theorem \ref{Speed} imply that $s_j \gg m_j^+$ and $n_j s_j \gg (m^+_j)^2$ and the same is true for $m_j^-$.  Therefore, by Lemma \ref{Wgamma}, using $\rightarrow_{\mathrm{p}}$ to denote convergence in probability as $j \rightarrow \infty$, we have
$$\frac{\tau_{m_j^+-1} N(\tau_{m_j^+-1})}{m_j^+} = \frac{\tau_{m_j^+-1}}{m_j^+} \sum_{k=1}^{m_j^+} N^{k,m_j^+} = \frac{1}{m_j^+} \sum_{k=1}^{m_j^+} W^{k,m_j^+} \rightarrow_{\mathrm{p}} \gamma.$$
Now using again that $m \tau_m \rightarrow 2/c$ almost surely as $m \rightarrow \infty$, we get
$$(m_j^+)^{-2} N(\tau_{m_j^+}) \rightarrow_{\mathrm{p}} \frac{c \gamma}{2}$$ and therefore 
\begin{equation}\label{final+}
t_j^2 N(\tau_{m_j^+}) \rightarrow_{\mathrm{p}} \frac{2(1 + \eps)^2 \gamma}{c}.
\end{equation}
By the same reasoning,
\begin{equation}\label{final-}
t_j^2 N(\tau_{m_j^-}) \rightarrow_{\mathrm{p}} \frac{2(1 - \eps)^2 \gamma}{c}.
\end{equation}
By letting $\eps \rightarrow 0$, we obtain the result from (\ref{Nsqueeze}), (\ref{final+}), and (\ref{final-}).
\end{proof}

\section*{Acknowledgements}
This project began while the authors were attending a Bath, UNAM, and CIMAT (BUC) workshop in Guanajuato, Mexico in May, 2016.  The authors thank Andreas Kyprianou, Juan Carlos Pardo, and Victor Rivero for their roles in organizing this workshop. ABB is supported by CONACyT-MEXICO, TR is supported by the Royal Society, and JS is supported in part by NSF Grants DMS-1206195 and DMS-1707953.

\end{document}